\documentclass{article}
\usepackage{graphicx}
\usepackage[utf8]{inputenc}
\usepackage[T1]{fontenc}
\usepackage{aeguill}
\usepackage{amssymb}
\usepackage[utf8]{inputenc}
\usepackage[T1]{fontenc}
\usepackage{aeguill}
\usepackage{amssymb}
\usepackage{graphicx}
\usepackage[a4paper,pdftex=true]{geometry}

\usepackage[round]{natbib}
\usepackage{amsmath}
\usepackage{hyperref}
\usepackage[a4paper,pdftex=true]{geometry}
\usepackage{color}

\newtheorem{theorem}{Theorem}

\newtheorem{corollary}[theorem]{Corollary}

\newtheorem{proposition}[theorem]{Proposition}

\newtheorem{remark}{Remark}

\newcommand{\Vect}[1]{\mathbf{#1}}
\newcommand{\tr}[1]{ {#1}^{\!  T}}
\newcommand{\Mat}[1]{  \underline{\mathbf{#1}}}
\newcommand{\Esp}{  \mathbf{E}}
\newcommand{\ZZ}{\mathbb{Z}}
\newcommand{\RR}{\mathbb{R}}
\newcommand{\I}{\mathbf{1}}

\title{\bf \vspace*{3cm}Expectiles for subordinated Gaussian processes with applications
}

\author{Jean-Fran\c{c}ois Coeurjolly$^{1}$ and Hedi Kortas$^{2}$  \vspace*{1cm} \\
$^{1}$ {\it Corresponding author:} \texttt{Jean-Francois.Coeurjolly@upmf-grenoble.fr} \\
Laboratory Jean Kuntzmann, Department of Statistics, Grenoble University,\\
121 Avenue Centrale, 38040 Grenoble Cedex 9, France.\\
$^{2}$ Higher Institute of Management, Department of Quantitative
Methods, \\ Sousse University, Tunisia.}

\linespread{1.5}

\begin{document}

\maketitle

\newpage

\begin{abstract}
In this paper, we introduce a new
class of estimators of the Hurst exponent of the fractional
Brownian motion (fBm) process. These estimators are based on
sample expectiles of discrete variations of a sample path of the
fBm process. In order to derive the statistical properties of the
proposed estimators, we establish asymptotic results for sample
expectiles of subordinated stationary
Gaussian processes with unit variance and correlation function
satisfying $\rho(i)\sim \kappa|i|^{-\alpha}$ ($\kappa\in \RR$)
with $\alpha>0$. Via a simulation study, we demonstrate the
relevance of the expectile-based estimation method and show that
the suggested estimators are more robust to data rounding than
their sample quantile-based counterparts.
\end{abstract}

{\bf Keywords:} expectiles; robustness; local shift sensitivity; subordinated Gaussian process; fractional Brownian motion.

\section{Introduction}

In the statistic literature, there has been a tremendous interest
in analysis, estimation and simulation issues pertaining to the
fractional Brownian motion (fBm) \citep{Mandelbrot}. This is due to
the fact that the fBm process offers an adequate modeling
framework for nonstationary self-similar stochastic processes with
stationary increments and can be used to model stochastic
phenomena relating to various fields of research. A fractional
Brownian motion (fBm), denoted $\{B_{H}(t),t \in \mathbb{R}\}$ with Hurst
exponent $0 < H <1$, is a zero-mean continuous-time Gaussian
stochastic process whose correlation function satisfies
$\mathbb{E}[B_{H}(t)B_{H}(s)]=\frac{\sigma^2}{2}(|t|^{2H}+|s|^{2H}-|t-s|^{2H})$
for all pairs $(t,s)\in \mathbb{R}\times\mathbb{R}$ and
$\sigma^2=\mathbb{E}(B_{H}(1)^2)$. The fBm is $H$-self-similar
i.e., for all $\alpha>0, B_{H}(\alpha
t)\stackrel{d}{=}\alpha^{H}B_{H}(t)$, where $\stackrel{d}{=}$
means the equality of all its finite-dimensional probability
distributions. The process corresponding to the first-order
increments of the fBm is known as the fractional Gaussian noise
(fGn) whose correlation function $\rho_{H}(i)$ is asymptotically
of the order of $|i|^{2H-2}$ for large lag lengths $i$. In
particular, for $1/2<H<1$, the correlations are not summable, i.e.
$\sum_{i=-\infty}^{+\infty}\rho_{H}(i)=\infty$. This property is
referred to as long-range dependence or long-memory whereas the
case $0<H<1/2$ corresponds to short memory.

Several methods aimed at estimating the Hurst characteristic
exponent or long-memory exponent have been developed. Among these statistical methods
figure the Fourier-based methods such as the Whittle maximum
likelihood estimator (see e.g. \citet{Beran, Robinson}) or the
spectral regression based estimator \citep{Beran}. The wavelet estimators have been
also extensively investigated either with an ordinary least
squares \citep{Flandrin}, a weighted least squares (see e.g.
\citet{Abry2, Abry3, BardLMS00,Soltani}) or a maximum likelihood (see e.g.
\citep{Wornell, Percival}) estimation schemes. \citet{FayMRT09} present a deep analysis of Fourier and wavelet methods. Recently, the
so-called discrete variations techniques (see e.g.\citet{Kent,
Istas, Coeurjolly1}) have been introduced. Within this class of
estimators, \citet{Coeurjolly2} proposed a new method
based on sample quantiles to estimate the Hurst exponent in the
more general setting of locally self-similar gaussian processes.
This estimator has been proven robust when dealing with outliers
\citep{Achard}. The latter, often encountered in real world
applications, can induce a significant estimation bias. Actually,
the advantage of quantiles is that they have a bounded
gross-error-sensitivity \citep{Hampel, Huber} allowing them to cope
efficiently with the problem of outliers. Nevertheless, this is
not the only problem faced when dealing with estimation issues.
Indeed, data rounding is also a serious impediment. Data rounding
is common in finance \citep{Bijwaard, Rosenbaum}, economics
\citep{Williams}, computer science \citep{Matthieu, Bois} and
computational physics \citep{Vilmart} and can lead to several
misinterpretations. Quantiles are unfortunately not robust against
data rounding since their local shift sensitivity is unbounded,
see \citet{Hampel,Huber}. \citet{Newey} have
introduced the so-called expectile which, although similar to
quantile, has a bounded local shift-sensitivity and thus can
handle the rounding issue.

In this paper, we derive a Bahadur-type representation for sample
expectiles of a subordinated Gaussian process with unit variance
and correlation function with hyperbolic decay. This allows us to
investigate the statistical properties of a new discrete
variations estimator of the Hurst exponent of the fBm process. In
constructing this estimator, we rely mainly on the scale and
location equivariance properties of expectiles \citep{Newey}. We
will show via a simulation study the robustness of the proposed
estimator against data rounding.

The remainder of this paper is structured as follows: Section 2
deals with asymptotic properties of sample expectiles for a class
of subordinated stationary Gaussian processes with unit variance
and correlation function satisfying $\rho(i)\sim
\kappa|i|^{-\alpha}$ ($\kappa\in \RR$) with $\alpha>0$. A short
simulation study is conducted to corroborate our theoretical
findings. In Section 3, we discuss a sample expectile-based
estimator of the Hurst exponent and derive its statistical
properties. We then perform a simulation study in order to confirm
the effectiveness of the suggested estimation method.

\section{Expectiles for subordinated Gaussian processes}

\subsection{A few notation}

Given some random variable $Z$ with mean $\mu$, $F_Z$ is referred
to the cumulative distribution function of $Z$ and $\xi_Z(p)$ for
$p\in (0,1)$ to its $p$th quantile. It is well-known that the
$p$th quantile of a random variable $Z$ can be obtained by
minimizing asymmetrically the weighted mean absolute deviation
$$
\xi_Z(p) := argmin_\theta \;\; \Esp \big[ |p- \mathbf{1}_{Z\leq
\theta}| . |Z-\theta|\big].
$$
In order to limit the local shift sensitivity of the $p$th
quantile, \citet{Newey} defined the notion of
expectile denoted by $E_Z(p)$ for some $p \in (0,1)$. Rather than
an absolute deviation (function), a quadratic loss function is
considered:
\begin{equation}\label{EZp}
E_Z(p) := argmin_\theta \;\; \Esp \big[ |p- \mathbf{1}_{Z\leq
\theta}| . (Z-\theta)^2\big].
\end{equation}
We may note that the $50\%$-expectile if nothing else than the
expectation of $Z$. \citet{Newey} argued that
providing $\Esp [Z]<+\infty$, then for every $p\in (0,1)$ the
solution of \eqref{EZp} is unique on the set $I_{F_Z}:=\{x\in \RR:
F_Z(x) \in (0,1)\}$.
The expectile can also be defined as the solution of the equation $\Esp\big[|p-\mathbf{1}_{Z\leq \theta}|. (Z-\theta) \big]=0$.\\
A key property of the expectile is that it is scale and location
equivariant \citep{Newey}. The scale equivariance property means
that for $Y=aZ$ where $a>0$, the $p$th expectile of $Y$ satisfies:
\begin{equation}\label{Eqv1}
E_Y(p)=a E_Z(p)
\end{equation}

The $p$th expectile is location equivariant in the sense that for
$Y=Z+b$ where $b\in\mathbb{R}$, the $p$th expectile of $Y$ is such
that:
\begin{equation}\label{Eqv2}
E_Y(p)=E_Z(p)+b
\end{equation}

Now, let $\Vect{Z}=(Z_1,\ldots,Z_n)$ be a sample of identically
distributed random variables with common distribution $F_Z$, the
sample expectile of order $p$ is defined as:
$$
\widehat{E} \left( p ; \Vect{Z}\right) := argmin_\theta \;\;
\frac1n \sum_{i=1}^n \left| p- \I_{Z_i\leq \theta}\right| .
\left(Z_i-\theta\right)^2.
$$

\subsection{Main result}

In order to derive asymptotic results for Hurst exponent estimates
based on expectiles, we have to provide asymptotic results for
sample expectiles of nonlinear functions of (centered)
subordinated stationary Gaussian processes with variance~1 and
with correlation function decreasing hyperbolically. This will be
the setting of the rest of this section. Let
$\{Y_i\}_{i=1}^{+\infty}$ be such a Gaussian process with
correlation function $\rho(\cdot)$ satisfying $\rho(i)\sim
\kappa|i|^{-\alpha}$ for $\kappa\in \RR$ and $\alpha>0$. Let
$\Vect{Y}=(Y_1,\ldots,Y_n)$ a sample of $n$ observations and
$\Vect{h(Y)}=(h(Y_1),\ldots,h(Y_n))$ its subordinated version for
some measurable function $h$.  We wish to provide asymptotic
results for the sample $p$th expectile defined by
\begin{equation}\label{sampleEZp}
\widehat{E} \left( p ; \Vect{h(Y)}\right) := argmin_\theta \;\;
\frac1n \sum_{i=1}^n \left| p- \I_{h(Y_i)\leq \theta}\right| .
\left(h(Y_i)-\theta\right)^2.
\end{equation}
Since the criterion is differentiable in $\theta$, the sample
$p$th expectile also satisfies the following estimating equation
$\psi_n\left(\widehat{E} \left( p ; \Vect{h(Y)}\right) ;
\Vect{h(Y)}\right)=0$ with
\begin{equation}\label{sampleEZp2}
\psi_n\left( \theta; \Vect{h(Y)} \right) := \frac1n \sum_{i=1}^n
\left| p- \I_{h(Y_i)\leq \theta}\right| .
\left(h(Y_i)-\theta\right).
\end{equation}
In the following, we need the two following additional notation
for $Y\sim \mathcal{N}(0,1)$
\begin{eqnarray*}
\psi_{h(Y)} (\theta; p ) &:=& \Esp \left[ \left| p-\I_{h(Y)\leq \theta} \right|. (h(Y)-\theta) \right] \\
\psi_{h(Y)}^\prime(\theta; p ) &:=& - \Esp \left[ \left| p-\I_{h(Y)\leq \theta} \right|\right] = - p (1-F_{h(Y)}(\theta)) -(1-p) F_{h(Y)}(\theta), \\
\end{eqnarray*}
the latter quantity corresponding to the derivative of
$\psi_{h(Y)}(\cdot,p)$ if it is well-defined. Let us note that the
$p$th expectile of $h(Y)$ satisfies $\psi_{h(Y)}(E_{h(Y)};p)=0$.
We now present the assumption on the function $h$ considered in our asymptotic result. \\

\noindent \textbf{[A(h,p)]} \quad $h(\cdot)$ is a measurable function such that $\Esp h(Y)^2<+\infty$ and such that the function $\psi_{h(Y)}(\cdot,p)$ is continuously differentiable in a neighborhood of $E_{h(Y)}(p)$ with negative derivative at this point. \\

Such an assumption is in particular satisfied under the following
one:

\noindent \textbf{[$\mathbf{A^\prime(h)}$]} \quad $h(\cdot)$ is a measurable function such that $\Esp h(Y)^2<+\infty$, $h$ is not ``flat'', i.e. for all $\theta\in \RR$ the set $\{y \in \RR: h(y)=\theta\}$ has null Lebesgue measure. \\

Indeed, if $h$ satisfies \textbf{[$\mathbf{A^\prime(h)}$]} then
$\psi(\cdot,p)$ is differentiable in $\theta$. And since,
$E_{h(Y)}(p)$ belongs to the set $I_{h(Y)}=\{x \in \RR:
F_{h(Y)}(x) \in (0,1)\}$, $\psi^\prime(E_{h(Y)}(p);p)$ is
necessarily negative. For the purpose of this paper, our main
result will be applied with $h(\cdot)=|\cdot|^\beta$ (with
$\beta>0$) or $h(\cdot)=\log|\cdot|$ which obviously satisfy
\textbf{[$\mathbf{A^\prime(h)}$]}.

The nature of the asymptotic result will depend on the correlation
structure of the Gaussian process and on the Hermite rank,
$\tau(p,\theta)$ of the function
$$
\widetilde\psi(t;p,\theta) := \left| p-\I_{h(t)\leq \theta}
\right|. (h(t)-\theta) - \psi_{h(Y)} (\theta; p ).
$$
We recall that the Hermite rank (see {\it e.g.} \citet{Taqqu}) corresponds to the smallest
integer such that the coefficient in the Hermite expansion of the
considered function is not zero. For the sake of simplicity,
assume that the Hermite rank of this function depends neither on
$\theta$ nor $p$ and denote it simply by $\tau$. Again, this could
be weakened since we believe that the next result could be proved
with the following Hermit rank: $\inf_{\theta\in \mathcal{V}(E_{h(Y);p})}
\tau(p,\theta)$. As an example, the Hermite rank of
$\widetilde\psi(\cdot,p,\theta)$ is $1$ for $h(\cdot)=\cdot$ and
$(p,\theta)\in (0,1)\times \RR$ and $2$ for
$h(\cdot)=|\cdot|^\beta$ ($\beta>0$) or $\log |\cdot|$ for
$(p,\theta)\in (0,1)\times \RR^+\setminus\{0\}$. We now present
our main result stating a Bahadur type representation for the
sample $p$th expectile of a subordinated Gaussian process.

\begin{theorem} \label{thm-bahadur}
Let $\{Y_i\}_{i=1}^{+\infty}$ a (centered) stationary Gaussian
process with variance~1 and correlation function satisfying
$\rho(i)\sim \kappa|i|^{-\alpha}$ ($\kappa\in \RR$), as $|i|\to
+\infty$ with $\alpha>0$ and with a function $h$ satisfying
\textbf{[A(h,p)]}. Let $\Vect{h(Y)}=(h(Y_1),\ldots,h(Y_n))$ a
sample of $n$ observations of the subordinated process, then, for
all $p\in (0,1)$
\begin{equation}\label{bahadur}
\widehat{E}\left( p ; \Vect{h(Y)} \right) - E_{h(Y)}(p) = -
\frac{\psi_n\left( E_{h(Y)}(p);\Vect{h(Y)}
\right)}{\psi^\prime\left( E_{h(Y)}(p);p \right)} \;\; + \;\;
o_P(r_n),
\end{equation}
where the sequence $r_n=r_n(\alpha,\tau)$ is defined by
$$
r_n= \left\{ \begin{array}{lll}
n^{-1/2} & \mbox{if} & \alpha\tau>1 \\
n^{-1/2} \log(n) & \mbox{if} & \alpha\tau=1 \\
n^{-\alpha \tau/2} & \mbox{if} & \alpha\tau>1.
\end{array} \right.
$$
\end{theorem}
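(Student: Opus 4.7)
The plan is to follow the standard M-estimator strategy. Setting $E := E_{h(Y)}(p)$ for brevity, the estimating equation $\psi_n(\widehat{E};\Vect{h(Y)})=0$ together with $\psi_{h(Y)}(E;p)=0$ yields the decomposition
\begin{equation*}
0 \;=\; \psi_n(E;\Vect{h(Y)}) \;+\; \bigl[\psi_{h(Y)}(\widehat{E};p)-\psi_{h(Y)}(E;p)\bigr] \;+\; R_n(\widehat{E}),
\end{equation*}
where the empirical-process remainder is $R_n(\theta) := [\psi_n(\theta;\Vect{h(Y)})-\psi_n(E;\Vect{h(Y)})] - [\psi_{h(Y)}(\theta;p)-\psi_{h(Y)}(E;p)]$. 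Under \textbf{[A(h,p)]}, a first-order Taylor expansion gives $\psi_{h(Y)}(\widehat{E};p)-\psi_{h(Y)}(E;p) = \psi'(E;p)(\widehat{E}-E) + o_P(\widehat{E}-E)$, so rearranging produces \eqref{bahadur} provided one can show (a) the consistency $\widehat{E}\to E$ in probability and (b) $R_n(\widehat{E})=o_P(r_n)$.

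For (a), because $\rho(i)\to 0$ the Gaussian process $\{Y_i\}$ is ergodic, so the ergodic theorem delivers $\psi_n(\theta;\Vect{h(Y)})\to \psi_{h(Y)}(\theta;p)$ almost surely at each fixed $\theta$. Since the derivative $-n^{-1}\sum_{i=1}^n|p-\I_{h(Y_i)\leq \theta}|$ is nonpositive, the map $\theta\mapsto \psi_n(\theta;\Vect{h(Y)})$ is nonincreasing; combined with the strict monotonicity of $\psi_{h(Y)}(\cdot;p)$ near $E$ (derivative $\psi'(E;p)<0$), a standard monotone-convergence argument identifies $\widehat{E}$ as a consistent zero of $\psi_n$.

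The rate of the leading term is obtained via Hermite expansions. The quantity $\psi_n(E;\Vect{h(Y)})$ equals $n^{-1}\sum_{i=1}^n\widetilde\psi(Y_i;p,E)$, a centered additive functional of the underlying Gaussian sequence whose defining function has Hermite rank $\tau$ and lies in $L^2$ of the standard Gaussian measure (thanks to $\Esp h(Y)^2<+\infty$). The classical limit theorems for partial sums of subordinated long-memory Gaussian processes---Breuer--Major when $\alpha\tau>1$, Taqqu and Dobrushin--Major when $\alpha\tau<1$, with a logarithmic correction at the boundary $\alpha\tau=1$---then yield $\psi_n(E;\Vect{h(Y)})=O_P(r_n)$ with exactly the rate announced in the theorem.

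The main difficulty is step (b). The strategy is to prove a local stochastic equicontinuity statement of the form $\sup_{|\theta-E|\leq \varepsilon_n}|R_n(\theta)|=o_P(r_n)$ for some sequence $\varepsilon_n\to 0$ containing $\widehat{E}$ with probability tending to one (via (a)). For fixed $\theta$ near $E$, Hermite-expanding $y\mapsto \widetilde\psi(y;p,\theta)-\widetilde\psi(y;p,E)$ and checking that its Hermite rank remains at least $\tau$ while its $L^2$-norm vanishes as $\theta\to E$, the same long-memory variance computation used above gives $\Esp R_n(\theta)^2=o(r_n^2)$ as $\theta\to E$. A chaining or finite-grid argument---exploiting the monotonicity of $\psi_n$ to control oscillations between grid points---then upgrades this pointwise bound into the desired uniform one, so that evaluation at the consistent estimator $\widehat{E}$ produces $R_n(\widehat{E})=o_P(r_n)$. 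The delicate aspect is the long-range dependence: the usual i.i.d. empirical-process machinery has to be replaced by moment and maximal inequalities tailored to Hermite-rank-preserving functionals of Gaussian sequences, in the spirit of the tools developed by Arcones, Ho--Hsing, and Taqqu.
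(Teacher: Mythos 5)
Your overall architecture (decompose the estimating equation, Taylor-expand the population criterion, control the leading term by Hermite-expansion variance computations) is sound, and your pointwise calculations match what the paper actually does: the rate of $\psi_n(E;\Vect{h(Y)})$ via the Hermite rank $\tau$, and the fact that the Hermite coefficients of $\widetilde\psi(\cdot;p,\theta)-\widetilde\psi(\cdot;p,E)$ vanish proportionally to $\theta-E$ so that $\Esp R_n(\theta)^2=o(r_n^2)$ near $E$ is exactly the paper's bound on the quantity it calls $U_n$. But you have routed the argument through the classical uniform-equicontinuity strategy, and that is where there is a genuine gap. Your step (b) requires $\sup_{|\theta-E|\leq\varepsilon_n}|R_n(\theta)|=o_P(r_n)$ over a neighborhood that contains $\widehat{E}$ with probability tending to one; but your consistency argument (ergodic theorem plus monotonicity) gives no rate, so $\varepsilon_n$ can only be taken as some unspecified $o(1)$ sequence, and a grid of such a neighborhood fine enough to control oscillations contains an unboundedly growing number of points. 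A union bound over that grid using only the Chebyshev/second-moment control you have established does not close: $N_n\times o(1)$ need not tend to zero. You acknowledge this by invoking ``maximal inequalities tailored to Hermite-rank-preserving functionals,'' but no such inequality is stated or proved, and for $\alpha\tau\leq 1$ (the long-memory regime the theorem explicitly covers) this is precisely the hard part; asserting it exists is not a proof.

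The paper avoids this difficulty entirely by invoking Lemma~1 of Ghosh, which is designed for exactly this situation: because $\psi_n(\cdot)$ is monotone, the event $\{r_n^{-1}(\widehat{E}-E)\leq y\}$ can be rewritten as an event about $\psi_n$ evaluated at the \emph{single deterministic point} $E+yr_n$, so the whole proof reduces to (i) tightness of $r_n^{-1}\psi_n(E)$ and (ii) the pointwise convergence $r_n^{-1}R_n(E+yr_n)\to 0$ in probability for each fixed $y$ --- both of which follow from the second-moment Hermite computations you already have. No separate consistency proof, no chaining, and no maximal inequality are needed for an $o_P$ (as opposed to almost-sure) Bahadur representation. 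If you want to salvage your route, you should either first establish tightness of $r_n^{-1}(\widehat{E}-E)$ (which again essentially requires the Ghosh-type monotonicity reduction) so that the neighborhood can be taken of radius $O(r_n)$ and the grid kept finite, or simply replace step (b) by the Ghosh argument.
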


\begin{proof}
Let us simplify the notation for sake of conciseness: let
$\widehat{E}=\widehat{E}(p;\Vect{h(Y)})$, $E=E_{h(Y)}(p)$,
$\psi_n(E)= \psi_n(E_{h(Y)}(p);\Vect{h(Y)})$ and
$\psi^\prime(E)=\psi^\prime_{h(Y)}(E_{h(Y)}(p);p)$. The first
thing to note is that the sequence $r_n$ corresponds to the
short-range or long-range dependence characteristic  of the
sequence
$\widetilde{\psi}(h(Y_1);p,\theta),\ldots,\widetilde{\psi}(h(Y_n);p,\theta)$.
More precisely $r_n^2$ corresponds to the asymptotic behavior  of
$\Esp \psi_n(E)^2$. Indeed, if $(c_j)_{j\geq 0}$ denotes the
sequence of the Hermite coefficients of the expansion of
$\widetilde{\psi}(\cdot;p,E)$ in Hermite polynomials (denoted by
$(H_j(t))_{j\geq 0}$ and normalized in such a way that
$E[H_j(Y)H_k(Y)]=j!\delta_{jk}$), we may have using standard
developments on Hermite polynomials (see {\it e.g.} \citet{Taqqu})
\begin{eqnarray}
\Esp \psi_n(E)^2 &=& \frac1{n^2} \sum_{i,j=1}^n \Esp \left[
\widetilde{\psi}(Y_i;p,E) \widetilde{\psi}(Y_j;p,E)
\right] \nonumber\\
&=& \frac1{n^2} \sum_{i,j=1}^n \sum_{k_1,k_2\geq 0}
\frac{c_{k_1}c_{k_2}}{k_1! k_2!} \Esp \left[
H_{k_1}(Y_i) H_{k_2}(Y_j) \right] \nonumber\\
&=& \frac{1}{n^2} \sum_{i,j=1}^n \sum_{k\geq \tau} \frac{c_k^2}{k!} \rho(j-i)^k \nonumber\\
&=& \mathcal{O}\Big( \underbrace{\frac1n \sum_{|i|\leq n}
|\rho(i)|^\tau}_{=:\rho_n}\Big)= \mathcal{O}(r_n^2). \label{rhon}
\end{eqnarray}

Let us define $V_n:=r_n^{-1} (\widehat{E}-E)$ and $W_n(E):=-r_n^{-1} \psi_n(E)/\psi^\prime(E)$. We just have to prove that $V_n-W_n(E)$ converges in probability to 0 as $n\to+\infty$. The proof is based on the application of Lemma~1 of \citet{Ghosh} which consists in satisfying  the two following conditions:\\
$(a)$ for all $\delta>0$, there exists $\varepsilon=\varepsilon(\delta)$ such that $P(|W_n(E)|>\varepsilon)< \delta$.\\
$(b)$ for all $y\in \RR$ and for all $\varepsilon>0$
$$
\lim_{n\to +\infty}P(V_n\leq y, W_n(E)\geq y+\varepsilon) =
\lim_{n\to +\infty}P(V_n\geq y+\varepsilon, W_n(E)\leq y)=0.
$$

$(a)$ is in particular fulfilled if we prove that $\Esp
W_n(E)^2=\mathcal{O}(1)$ which follows from~\eqref{rhon} since
$\Esp W_n(E)^2=\psi^\prime(E)^{-2} r_n^{-2} \Esp \psi_n(E)^2 =
r_n^{-2} \times \mathcal{O}(\rho_n) = \mathcal{O}(1)$.

$(b)$ We consider only the first limit. The second one follows
similar developments. We first state that the map $\psi_n(\cdot)$
is decreasing. Indeed, let $\theta\leq \theta^\prime$ and denote
by $Z_i(\theta)$ the variable $|p-\I_{h(y_i)\leq
\theta)}|.(h(Y_i)-\theta)$. If $h(Y_i) \leq \theta$ or $h(Y_i) >
\theta$, we obviously get $Z_i(\theta)>Z_i(\theta^\prime)$ a.s.
leading to the decreasing of $\psi_n(\cdot)$. And in the in between
case, $Z_i(\theta)-Z_i(\theta^\prime)= p(\theta^\prime-\theta) +
\theta^\prime-h(Y_i) \geq 0$ which leads to the same conclusion.
Let $y\in \RR$, then also using the fact that
${\psi}_n(\widehat{E})= \psi(E)=0$ and $\psi^\prime(E)<0$, we
derive
\begin{eqnarray*}
\{V_n\leq y \} &=& \{ \widehat{E} \leq y\times r_n + E  \} \\
&=& \{ \psi_n(\widehat{E}) \geq\psi_n(y \times r_n +E) \} \\
&=& \{ \psi(y \times r_n +E) - \psi_n(y\times r_n+E) \geq \psi(y\times  r_n+E) -\psi(E) \} \\
&=& \{ W_n(y \times r_n +E) \leq y_n \},
\end{eqnarray*}
where $y_n=r_n^{-1} \psi^\prime(E)^{-1} (\psi(y \times
r_n+E)-\psi(E))$. Under the assumption \textbf{[A(h,p)]}, $y_n \to
y$ as $n\to +\infty$. Now, let $U_n:= \psi^\prime(E) \left( W_n(E)
-W_n(y \times r_n+E) \right)$, explicitly given by
$$
U_n = \frac1{nr_n} \sum_{i=1}^n \left(\widetilde \psi(
Y_i;p,E+y\times r_n)- \widetilde \psi( Y_i;p,E) \right).$$ Let
$c_{j,n}$ the $j$th Hermite coefficient of the function
$r_n^{-1}\left(\widetilde\psi(t;p,E+y\times
r_n)-\widetilde\psi(t;p,E) \right)$, then under the assumption
\textbf{[A(h,p)]} and from the dominated convergence theorem we
can prove that
$$
c_{j,n} \stackrel{n\to+\infty}{\longrightarrow} y \; \Esp \left[
|p-\I_{h(Y)\leq E}| H_j(Y) \right] =: \widetilde c_j.
$$
Therefore for $n$ large enough,
\begin{eqnarray*}
\Esp[U_n^2] &=& \frac1{n^2} \sum_{i,j=1}^n \sum_{k\geq \tau} \frac{c_{k,n}^2}{k!}\rho(j-i)^k \\
&\leq & \frac2n \sum_{|i|\leq n}\sum_{k\geq \tau} \frac{\widetilde{c}_k^2}{k!} \rho(i)^\tau \\
&=& \mathcal{O}(\rho_n) = \mathcal{O}(r_n^2)
\end{eqnarray*}
which leads to the convergence of $U_n$ to 0 in probability. For
all $\varepsilon>0$, there exists $n_0(\varepsilon)$ such that for
all $n\geq n_0(\varepsilon)$, $y_n\leq y + \varepsilon/2$.
Therefore for $n\geq n_0(\varepsilon)$
\begin{eqnarray*}
P(V_n\leq y \;,\; W_n\geq y+\varepsilon)& =& P(W_n(y \times r_n +E) \leq y_n\;,\; W_n \geq y+\varepsilon) \\
&\leq & P(W_n(y\times r_n +E)\leq y+\varepsilon/2\;, \;W_n(E)\geq y+\varepsilon) \\
&\leq& P \left( \left| W_n(y\times r_n + E) -W_n(E)\right|\geq \varepsilon/2\right) \\
& \stackrel{n\to +\infty}{\to} & 0,
\end{eqnarray*}
which ends the proof.
\end{proof}

In the case of short-range dependence, i.e. $\alpha\tau>1$ then,
using the Bahadur type representation of expectiles, we derive
immediately the following asymptotic normality for the sample
expectile and some generalisations. This result is based on
standard central limit theorem for means of subordinated Gaussian
stationary processes \citep{Taqqu, Arcones}.Therefore the proof is
omitted.

\begin{corollary} \label{cor-bahadur}${ }$\\
$(i)$ Under the assumptions of Theorem~\ref{thm-bahadur} with
$p\in (0,1)$ and $\alpha \tau >1$, then as $n\to + \infty$
$$
\sqrt{n} \left( \widehat{E}\left( p ; \Vect{h(Y)} \right) -
E_{h(Y)}(p) \right) \stackrel{d}{\longrightarrow}
\mathcal{N}(0,\sigma^2(p)),$$ where
$$
\sigma^2(p) = \frac{1}{\psi^\prime\left( E_{h(Y)}(p);p \right)^2}
\; \sum_{i\in \ZZ} \sum_{k\geq \tau} \frac{c_k(p)^2}{k!} \rho(i)^k
$$
and where $c_k(p)$ is the $k$th Hermite coefficient of the expansion of the function $\psi(h(\cdot); E_{h(Y)}(p);p)$ in Hermite polynomials.\\
$(ii)$ Let $\{Y_i^1\}_{i=1}^{+\infty}$ and
$\{Y_i^2\}_{i=1}^{+\infty}$ two (centered) stationary Gaussian
processes with variances~1 and correlation functions (resp.
cross-correlation functions) $\rho^1,\rho^2$ (resp.
$\rho^{12},\rho^{21}$) decreasing hyperbolically with exponents
$\alpha^1,\alpha^2$ (resp. $\alpha^{12},\alpha^{21}$). Let $p \in
(0,1)$, $h$ a function satisfying \textbf{[A(h,p)]} and let
$\Vect{h(Y^1)}$ and $\Vect{h(Y^2)}$ be the samples of $n$
observations of the two subordinated samples. If
$\min(\alpha^1,\alpha^2,\alpha^{12},\alpha^{21})\times \tau>1$,
then as $n\to +\infty$
$$\sqrt{n} \left( \widehat{E}\left( p ; \Vect{h(Y^1)} \right) - E_{h(Y)}(p) , \widehat{E}\left( p ; \Vect{h(Y^2)} \right) - E_{h(Y)}(p)\right)^T
\stackrel{d}{\longrightarrow} \mathcal{N}(0,\Mat{\Sigma}).
$$
where $\Mat{\Sigma}$ is the $(2,2)$ matrix with entries
$\Sigma_{ab}$ for $a,b=1,2$ given by
\begin{equation}\label{def-Sigma}
\Sigma_{ab} = \frac1{\psi^\prime\left( E_{h(Y)}(p);p \right)^2} \;
\sum_{i\in \ZZ} \sum_{k\geq \tau} \frac{c_k(p)^2 }{k!}
\rho^{ab}(i)^k.
\end{equation}
\end{corollary}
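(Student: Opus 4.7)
The plan is to combine the Bahadur-type representation of Theorem~\ref{thm-bahadur} with classical central limit theorems for partial sums of subordinated stationary Gaussian sequences, as the author already hints.

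For part $(i)$, I would apply Theorem~\ref{thm-bahadur} with $r_n=n^{-1/2}$, which is the correct rate precisely because $\alpha\tau>1$. Multiplying the Bahadur expansion by $\sqrt{n}$ gives
$$
\sqrt{n}\bigl(\widehat{E}(p;\Vect{h(Y)}) - E_{h(Y)}(p)\bigr) = - \frac{\sqrt{n}\,\psi_n\bigl(E_{h(Y)}(p);\Vect{h(Y)}\bigr)}{\psi^\prime\bigl(E_{h(Y)}(p);p\bigr)} + o_P(1).
$$
By construction $\sqrt{n}\,\psi_n(E_{h(Y)}(p);\Vect{h(Y)})$ is $n^{-1/2}$ times the partial sum of the centered stationary sequence $\widetilde\psi(Y_i;p,E_{h(Y)}(p))$, which is a subordinated functional of a standard Gaussian process with Hermite rank $\tau$. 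Since $\alpha\tau>1$ forces $\sum_{i\in\ZZ}|\rho(i)|^\tau<+\infty$, the Breuer--Major type CLT of \citet{Taqqu} and \citet{Arcones} yields asymptotic normality with variance
$$
\lim_{n\to+\infty} \mathrm{Var}\bigl(\sqrt{n}\,\psi_n(E_{h(Y)}(p);\Vect{h(Y)})\bigr) = \sum_{i\in\ZZ} \sum_{k\geq\tau} \frac{c_k(p)^2}{k!}\rho(i)^k,
$$
the identification of the series being exactly the Hermite expansion computation already carried out in~\eqref{rhon} in the proof of Theorem~\ref{thm-bahadur}, but now letting $n$ tend to infinity instead of taking the $\mathcal{O}$-bound. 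Absolute convergence of the double sum follows from the summability of $|\rho(i)|^\tau$ combined with $\sum_{k\geq\tau} c_k(p)^2/k! = \Esp[\widetilde\psi(Y;p,E_{h(Y)}(p))^2]<+\infty$, which is ensured by \textbf{[A(h,p)]}. Dividing by $\psi^\prime(E_{h(Y)}(p);p)^2$ delivers the announced $\sigma^2(p)$.

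For part $(ii)$, I would reduce to the Cramér--Wold device. Applying Theorem~\ref{thm-bahadur} separately to $\Vect{h(Y^1)}$ and $\Vect{h(Y^2)}$ (noting that $E_{h(Y^1)}(p)=E_{h(Y^2)}(p)=E_{h(Y)}(p)$ since both marginals are $\mathcal{N}(0,1)$) and taking an arbitrary linear combination with coefficients $(\lambda_1,\lambda_2)\in\RR^2$, the problem reduces, up to a $o_P(1)$ term, to a univariate CLT for
$$
-\frac{1}{\psi^\prime(E_{h(Y)}(p);p)} \cdot \frac{1}{\sqrt{n}} \sum_{i=1}^n \bigl(\lambda_1\,\widetilde\psi(Y_i^1;p,E_{h(Y)}(p)) + \lambda_2\,\widetilde\psi(Y_i^2;p,E_{h(Y)}(p))\bigr).
$$
The summand is a subordinated functional of the bivariate Gaussian sequence $(Y_i^1,Y_i^2)$ with component-wise Hermite rank $\tau$, and the multivariate Breuer--Major theorem of \citet{Arcones} applies under $\min(\alpha^1,\alpha^2,\alpha^{12},\alpha^{21})\,\tau>1$, which guarantees summability of each of the four covariance series $\sum_i \rho^{ab}(i)^k$ for $k\geq\tau$ and $a,b\in\{1,2\}$. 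Computing the asymptotic variance of this linear combination by expanding each $\widetilde\psi(Y_i^a;p,E_{h(Y)}(p))$ in Hermite polynomials with coefficients $c_k(p)$ and applying the diagram/Mehler formula to pairs $(Y_i^a,Y_j^b)$ then yields exactly $\lambda_1^2\Sigma_{11}+2\lambda_1\lambda_2\Sigma_{12}+\lambda_2^2\Sigma_{22}$ with the $\Sigma_{ab}$ defined in~\eqref{def-Sigma}.

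The only genuinely non-mechanical step is the verification that the Hermite coefficients of $\widetilde\psi(\cdot;p,E_{h(Y)}(p))$ appearing in the cross-covariance $\Esp[\widetilde\psi(Y_i^a;p,E)\,\widetilde\psi(Y_j^b;p,E)]$ are the \emph{same} $c_k(p)$ computed with respect to the marginal standard Gaussian measure, so that the cross term collapses to $\sum_{k\geq\tau} c_k(p)^2\rho^{ab}(j-i)^k/k!$. This is the content of Mehler's formula for bivariate standard Gaussian vectors and is the main obstacle a priori; everything else is routine once Theorem~\ref{thm-bahadur} and the CLTs of Taqqu and Arcones are invoked.
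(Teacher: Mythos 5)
Your argument is correct and is precisely the route the paper intends: the authors omit the proof, stating only that it follows from the Bahadur representation of Theorem~\ref{thm-bahadur} combined with the standard central limit theorems of \citet{Taqqu} and \citet{Arcones} for subordinated stationary Gaussian sequences, which is exactly what you carry out (including the Cram\'er--Wold reduction and the Mehler-formula identification of the cross-covariances for part $(ii)$). Nothing in your write-up deviates from or adds a gap to that intended argument.
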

As it was established for sample quantiles \citep{Coeurjolly2}, a
non standard limit towards a Rosenblatt process is expected in the
other cases ($\alpha\tau\leq 1$). This case is not considered here.

\subsection{Simulations}

To illustrate a part of the previous results, we propose a short
simulation study in this section. The latent stationary Gaussian
process we consider here is the fractional Gaussian noise with
variance~1, which is obtained by taking the discretized increments
from a fractional Brownian motion. The correlation function of the
fractional Gaussian noise with Hurst parameter (or self-similarity
parameter) $H\in (0,1)$ satisfies the hyperbolic decreasing
property required in Theorem~\ref{thm-bahadur} with $\alpha=2-2H$.
Discretized sample paths of fractional Brownian motion can be
generated exactly using the embedding circulant matrix method
popularized by \citet{WoodChan} (see also \citet{Coeurjolly0}) which
is implemented in the \texttt{R} package \texttt{dvfBm}.

Figures~\ref{fig1-bplot} and \ref{fig2-bplot} illustrate the
convergence of the sample expectiles. Three $h$ functions are
considered: $h(\cdot)=(\cdot)$, $(\cdot)^2$ and $\log|\cdot|$. The
related Hermite rank of the function $\widetilde{\psi}$ is
respectively 1,2 and 2 for these three $h$ functions. The sample
size of the simulation is fixed to $n=500$. We can claim the
convergence of the sample expectile $\widehat{E} (p;{h(Y)})$
towards $E_{h(Y)}(p)$ for all  the values of $\alpha$ (or $H$), p
and for the three functions $h$ considered. If we focus on
$h(\cdot)=(\cdot)$, we can also remark a higher variance of the
sample estimates for $\alpha=0.6$ compared to $\alpha=1.4$. This
is in agreement with the theory since for $\alpha=0.6$,
$\alpha\tau=0.6<1$ and the rate of convergence is lower than
$n^{-1/2}$ which means an increasing of the variance. For the two
other functions considered, then $\alpha \tau$ is always greater
than 1 (it equals either 2.8 or 1.2 in our simulations) and we do
not observe such an increasing of the variance.

To put emphasis on this last point, Figure~\ref{fig-logvar} shows
in log-scale the average (over the 9 order of expectiles
considered in the simulation, i.e. $p=0.1,\ldots,0.9$) of the
empirical variances in terms of $n$ for the three $h$ functions
and for the two values of $\alpha=0.6$ and $\alpha=1.4$. We
clearly observe that as soon as $\alpha \tau>1$, the slope of the
curves is close to $-1$ which agrees with the result presented in
Corollary~\ref{cor-bahadur} for example. When $h(\cdot)=1$ and
$\alpha=0.6$, we observe that the slope is about $-0.6$ which
seems to agree with the convergence in $n^{-\alpha \tau}$ which is
expected from Theorem~\ref{thm-bahadur}.

\begin{figure}[htbp]
\begin{tabular}{cc}
\includegraphics[scale=.17]{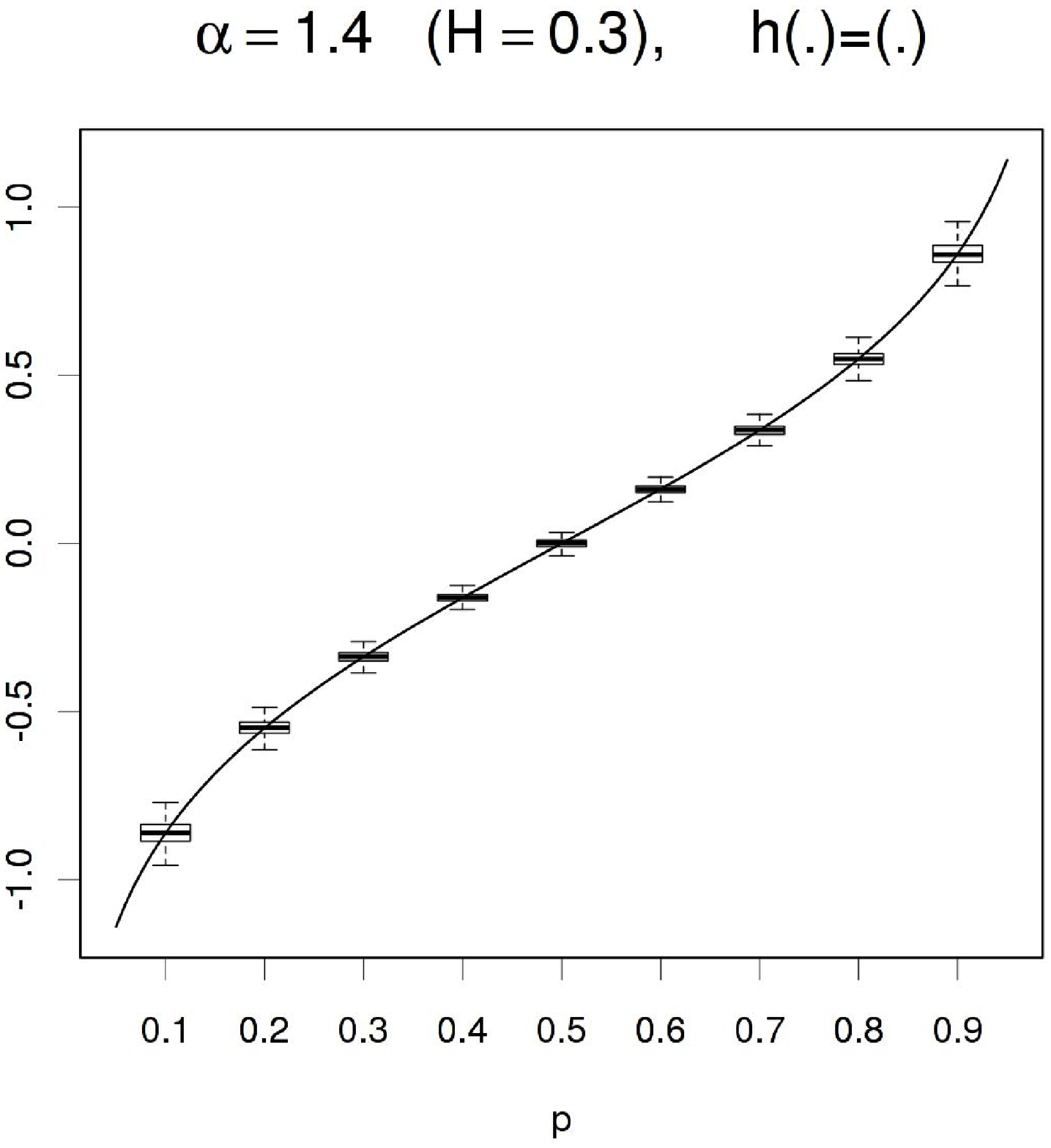} & \includegraphics[scale=.17]{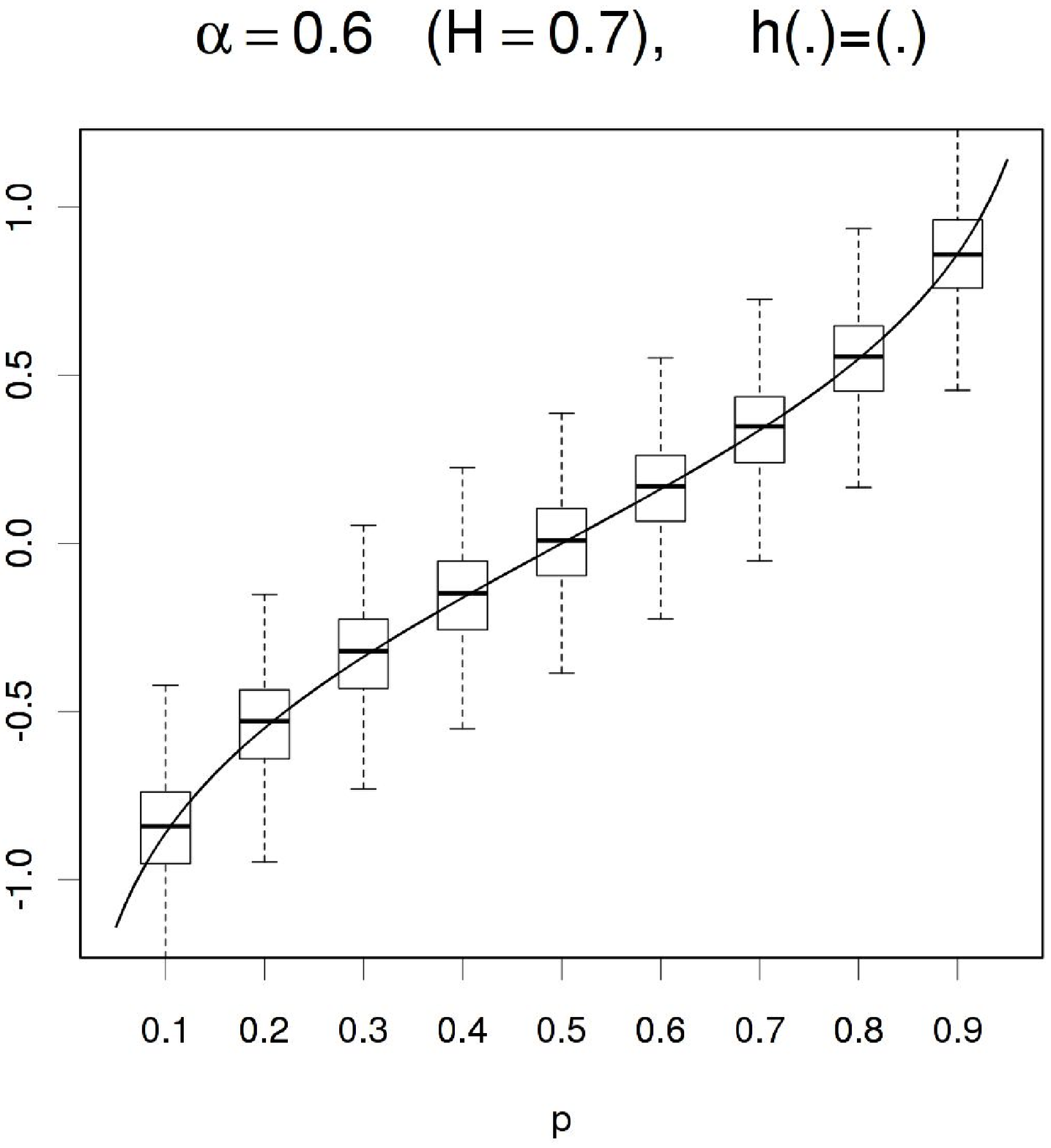} \\
\end{tabular}
 \caption{Boxplots of sample expectiles for
expectiles of order $p=0.1,\ldots,0.9$ based on $m=500$
replications of fractional Gaussian noise with length $n=500$ and
with Hurst parmeter $H=0.3$ (left, $\alpha=1.4$) and $H=0.7$
(right, $\alpha=0.6$). The $h$ functions considered here is the
identity function (with Hermite rank~1). The curves correspond to
the theoretical expectile functions for $Y\sim
\mathcal{N}(0,1)$.\label{fig1-bplot}}
\end{figure}

\begin{figure}[htbp]
\begin{tabular}{cc}
\includegraphics[scale=.17]{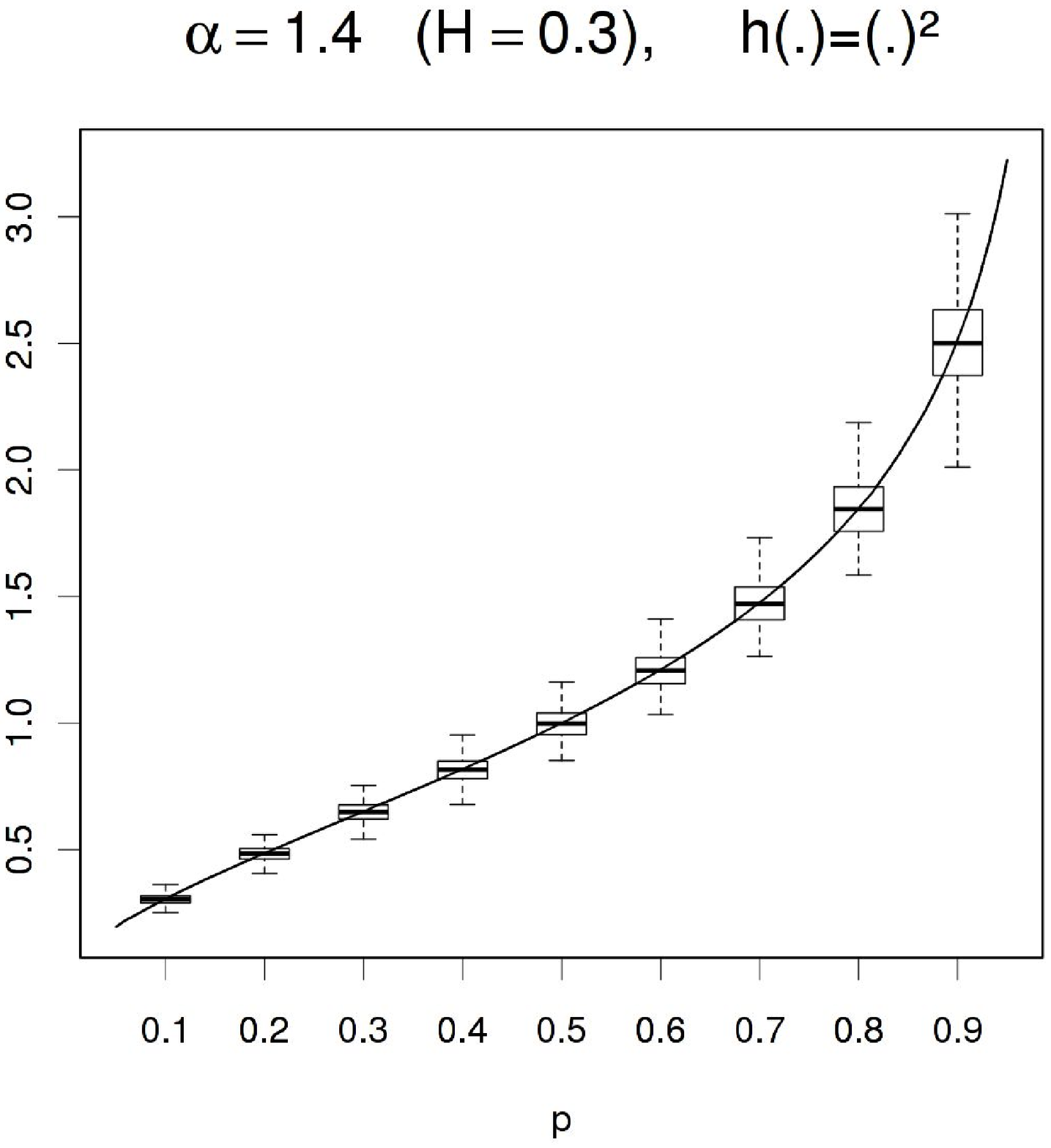} & \includegraphics[scale=.17]{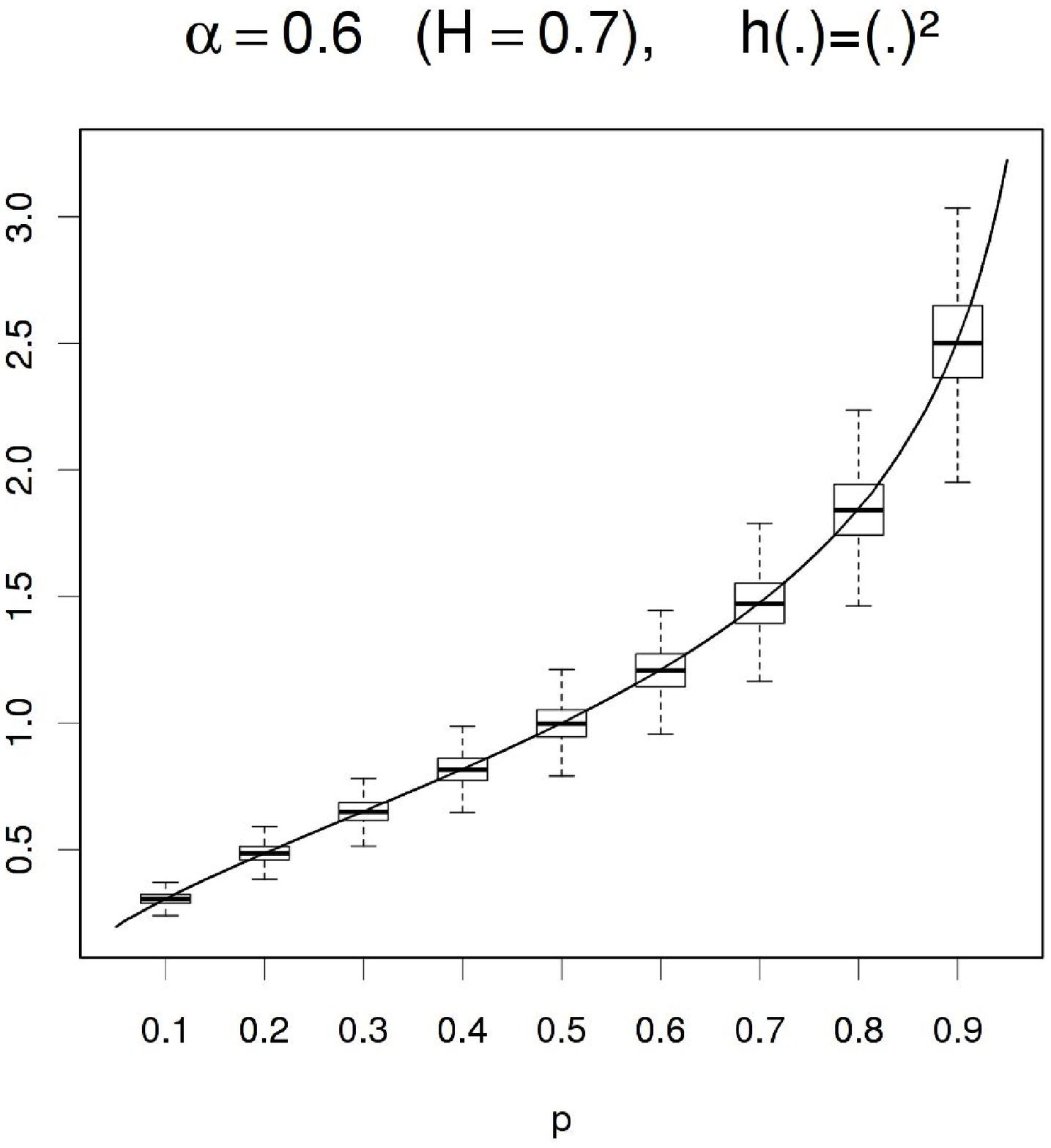} \\
\includegraphics[scale=.17]{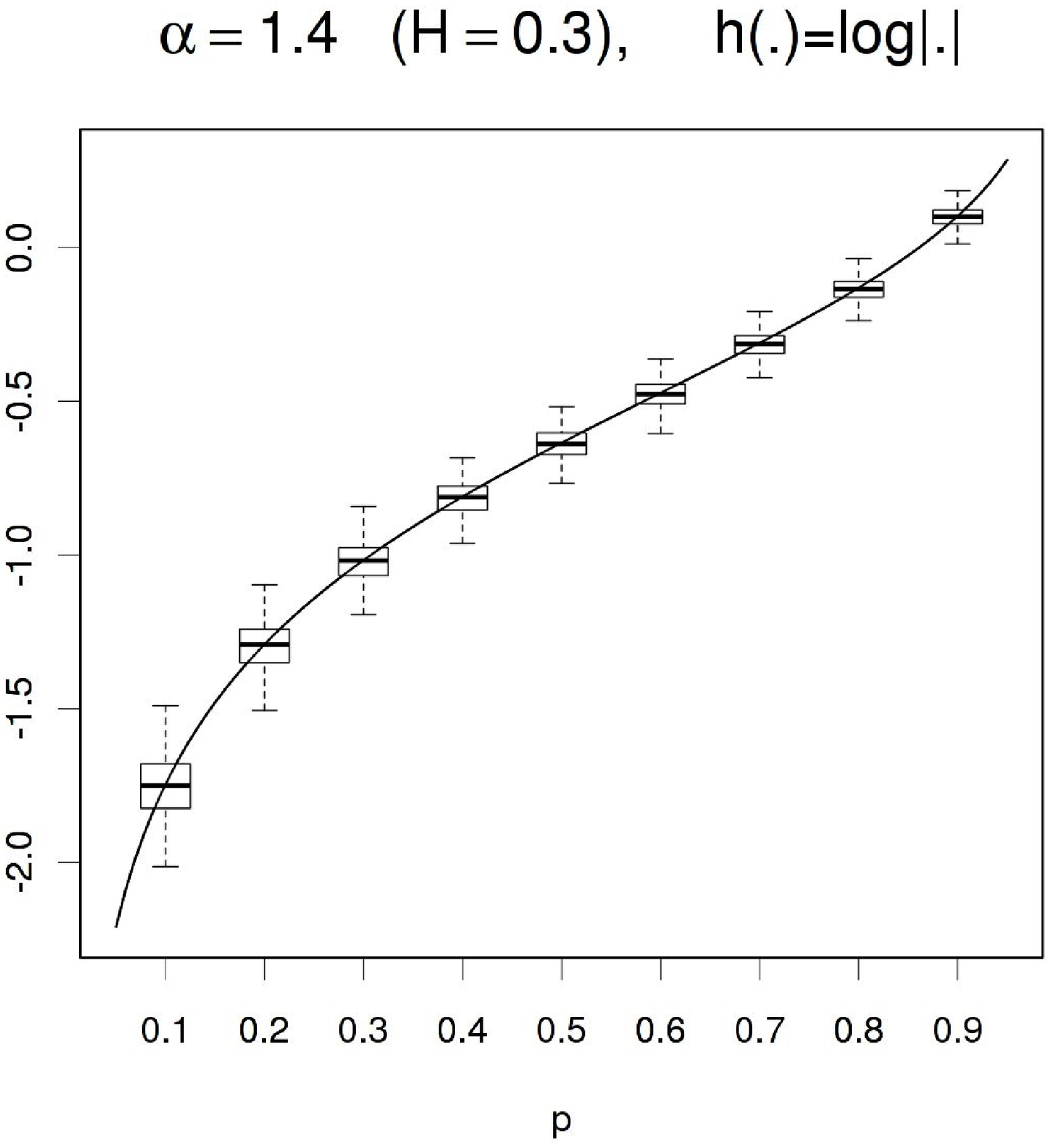} & \includegraphics[scale=.17]{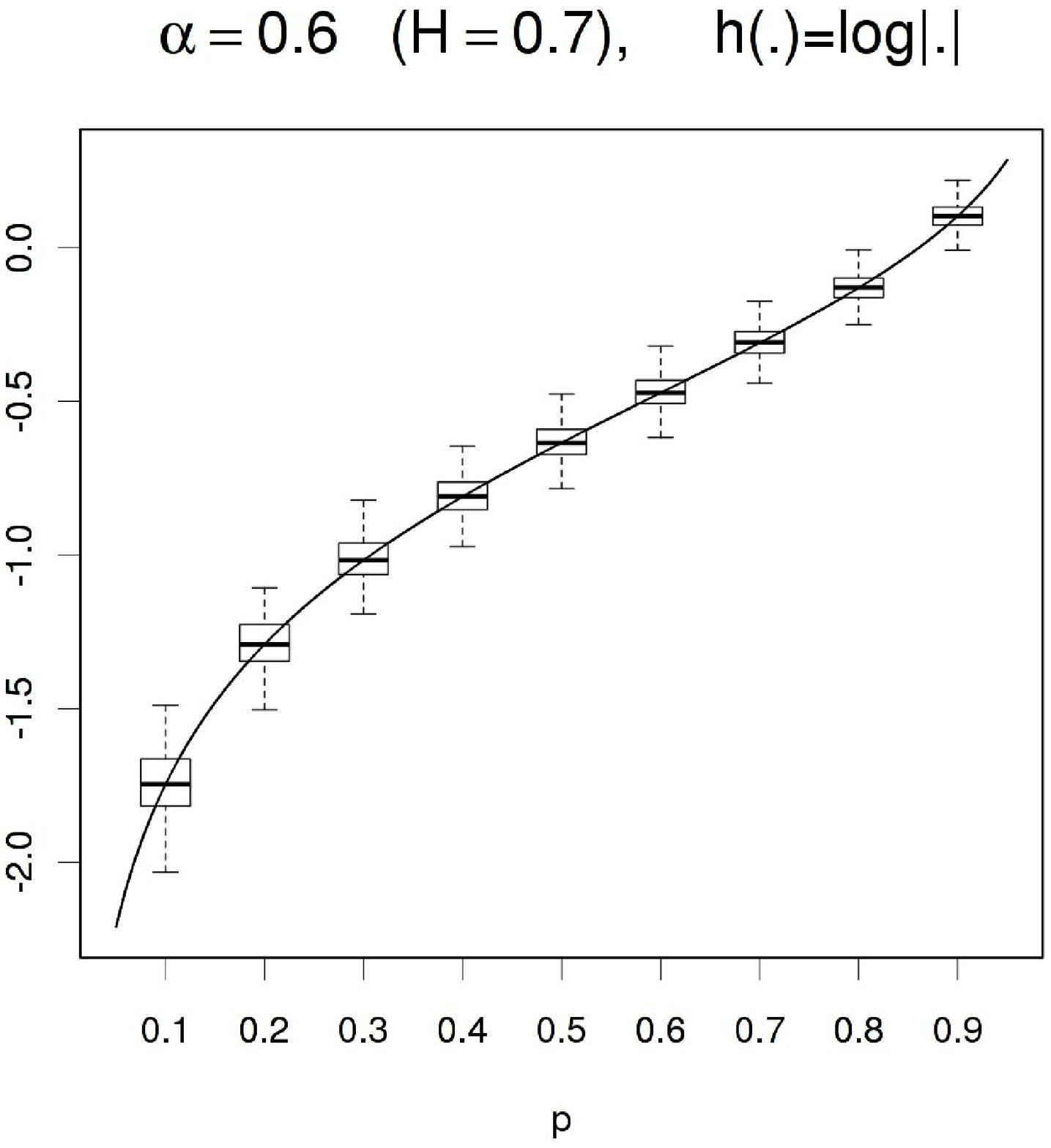}
\end{tabular}
 \caption{Boxplots of sample expectiles for
expectiles of order $p=0.1,\ldots,0.9$ based on $m=500$
replications of fractional Gaussian noise with length $n=500$ and
with Hurst parmeter $H=0.3$ (left, $\alpha=1.4$) and $H=0.7$
(right, $\alpha=0.6$). Two $h$ functions with Hermite rank 2 have
been considered here: $h(\cdot)=(\cdot)^2$ (top) and
$h(\cdot)=\log|\cdot|$ (bottom). The curves correspond to the
theoretical expectile functions for $Y^2$ (middle) and $\log|Y|$
(bottom) where $Y\sim \mathcal{N}(0,1)$.\label{fig2-bplot}}
\end{figure}

\begin{figure}[htbp]
\begin{tabular}{cc}
\includegraphics[scale=.17]{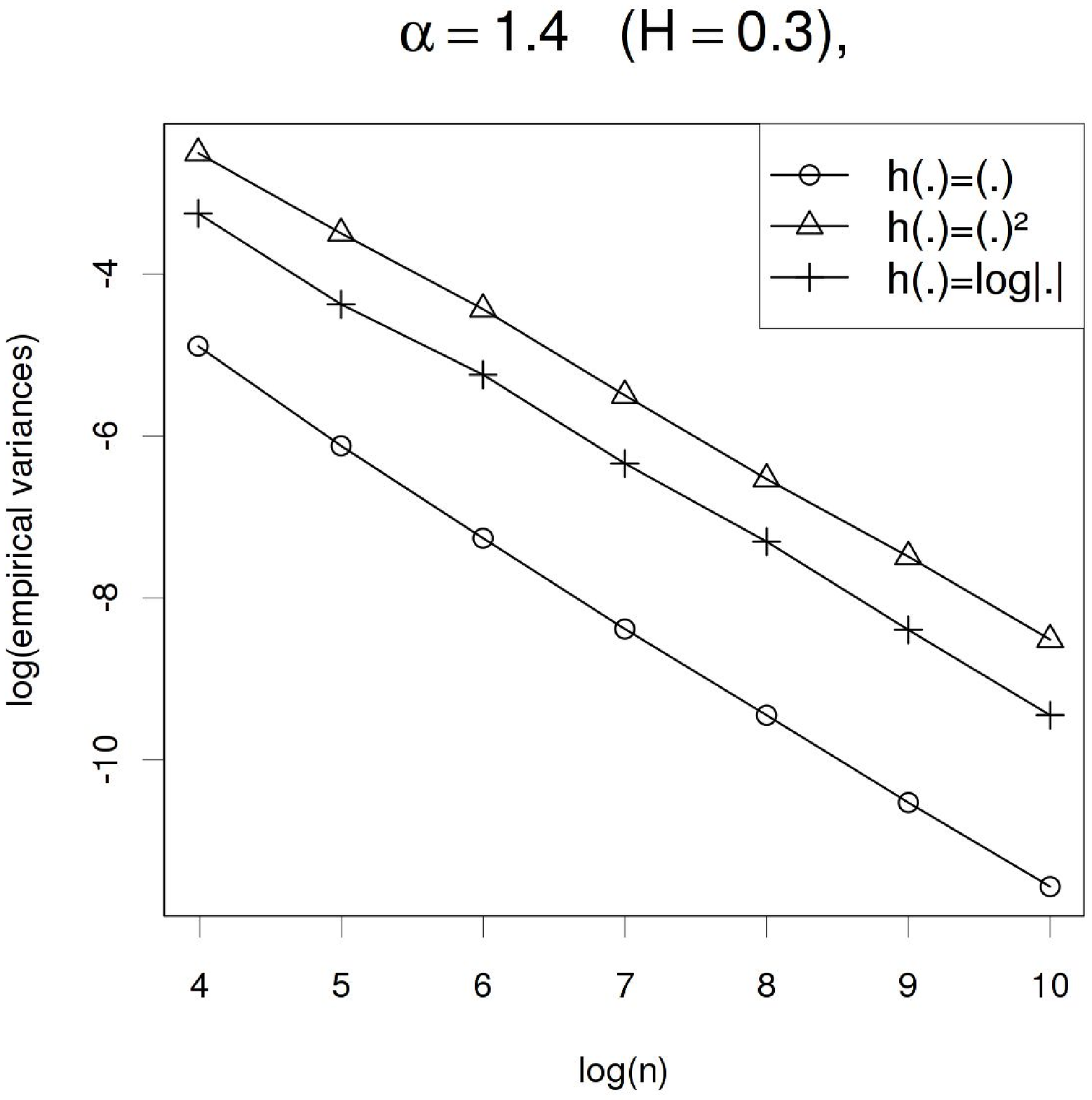} & \includegraphics[scale=.17]{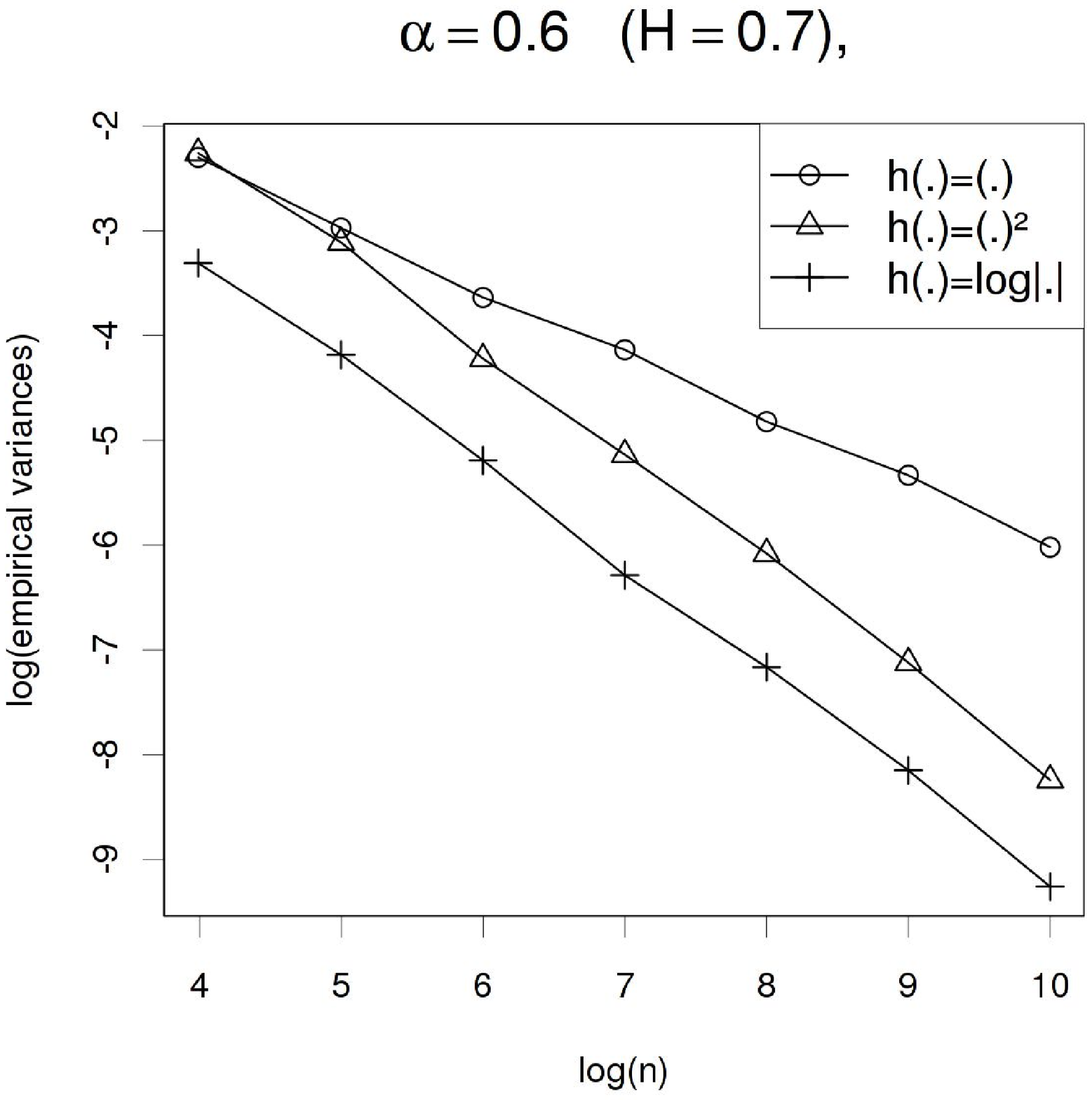} \\
\end{tabular}
 \caption{Means of empirical variances of sample expectiles in
terms of $n$ in log-scale based on $m=500$ replications of
fractional Gaussian noise with parameters $H=0.3$ (left,
$\alpha=1.4$) and $H=0.7$ (right, $\alpha=0.6$). More precisely,
we consider the vector of probability $(0.1,\ldots,0.9)$ for the
orders of the expectiles and we compute $\widehat{\sigma}_n^2=1/9
\times \sum_{i=1}^{9} \widehat{\sigma}_{i,n}^2$ where
$\widehat{\sigma}^2_{i,n}$ is the empirical variance for the
expectile with order $i/10$ for the sample size $n$. Three choices
of $h$ functions have been considered:
$h(\cdot)=(\cdot),(\cdot)^2$ and $\log|\cdot|$.\label{fig-logvar}}
\end{figure}

\section{Estimation of the Hurst exponent using sample expectiles and discrete variations}

\subsection{Estimation method and asymptotic results}

Let $\Vect{X}=\left(X(i)\right)_{i=1,\ldots,n}$ be a discretized
version of a fractional Brownian motion process and let $\Vect{a}$
be a filter of length $\ell+1$ and of order $\nu\geq 1$ with real
components i.e.:
$$ \sum_{q=0}^\ell q^j a_q=0, \mbox{ for } j=0,\ldots,\nu-1 \;\; \quad  \mbox{ and } \quad \sum_{q=0}^\ell q^\nu a_q \neq 0. $$
Define also $\Vect{X}^{\Vect{a}}$ to be the series obtained by
filtering $\Vect{X}$ with $\Vect{a}$, then:
$$
X^{\Vect{a}}\left(i\right) = \sum_{q=0}^\ell a_q X\left(
i-q\right), \quad \mbox{ for } i\geq \ell+1
$$
and $\Vect{\tilde{X}}^{\Vect{a}}$ as the normalized vector of
$\Vect{X}^{\Vect{a}}$, i.e.:
$$
\Vect{\tilde{X}}^{\Vect{a}}=\frac{\Vect{X}^{\Vect{a}}}{\Esp((X^{\Vect{a}}(1))^2)^{1/2}}.
$$
It should be noticed here that the filtering operation allows to
decorrelate the increments of the discretized version of the
fractional Brownian motion process. Indeed, it may be proved (see {\it e.g.} \citet{Coeurjolly1}) that: $\rho_{H}^{a}(i)\sim
k_{H}|i|^{2H-2\nu}$ as $|i|\rightarrow +\infty$.

Consider the sequence $(\Vect{a}^m)_{m\geq 1}$ defined by:
$$
a_i^m \; = \left\{ \begin{array}{ll}\
a_j&\; \mbox{if } i=jm \\
0  &\; \mbox{otherwise } \\
\end{array} \right. \qquad\qquad \mbox{for } i=0,\ldots,m\ell \;,
$$
which is the filter $\Vect{a}$ dilated $m$ times. It has been
shown in \citet{Coeurjolly1, Coeurjolly2} that:
$$
\tilde{\Vect{X}}^{\Vect{a^{m}}}=\frac{\Vect{X}^{\Vect{a^{m}}}}{\sigma_{m}}
$$
where $ \sigma_{m}^2=m^{2H}\sigma^{2}\kappa_{H}^{\Vect{a}}$ and
$\kappa_{H}^{\Vect{a}}=\frac{-1}{2} \sum_{q,q^\prime=0}^\ell a_q
a_{q^\prime} |q-q^\prime|^{2H}$.

The following proposition allows us to construct an ordinary least
squares (OLS) estimator of the Hurst exponent $H$ of a fBm
process based on sample expectiles.

\begin{proposition}

Let $\widehat{E} \left(p;\Vect{h}(\Vect{{X}^{a^m}})\right)$ and
$\widehat{E}\left(p; \Vect{h}(\Vect{\tilde{X}^{a^m}})\right)$ be
the $p$th order sample expectiles for the filtered series
$\Vect{h}(\Vect{{X}^{a^m}})$ and
$\Vect{h}(\Vect{\tilde{X}^{a^m}})$ respectively. Here two positive
functions $h(\cdot)$ are considered, namely:
$h(\cdot)=|\cdot|^\beta$ for $\beta>0$ and
$h(\cdot)=\log|\cdot|$. We have:

\begin{equation} \label{scaling1}
\widehat{E}\left(p;{|\Vect{X}^{\Vect{a}^m}|^\beta}\right)=\sigma_{m}^{\beta}\widehat{E}\left(p;
{|\tilde{\Vect{X}}^{\Vect{a}^m}|^\beta})\right)
\end{equation}
and
\begin{equation} \label{scaling2}
\widehat{E}\left(p;
{\log|\Vect{X}^{\Vect{a}^m}|}\right)=\frac{1}{2}\log(\sigma_{m}^2)+\widehat{E}\left(p;
{\log|\Vect{\tilde{X}}^{\Vect{a}^m}|}\right).
\end{equation}
\end{proposition}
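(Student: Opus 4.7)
The proposition is essentially an elementary consequence of the scale and location equivariance properties of the (population) expectile recalled in equations \eqref{Eqv1} and \eqref{Eqv2}. The plan is therefore very short: first verify that the \emph{sample} expectile inherits these equivariance properties from the population one, and then apply them to the two transformations $|\cdot|^\beta$ and $\log|\cdot|$ combined with the identity $\tilde{\Vect{X}}^{\Vect{a}^m}=\Vect{X}^{\Vect{a}^m}/\sigma_m$.

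The first step is to establish, directly from the definition \eqref{sampleEZp}, that for any sample $\Vect{Z}=(Z_1,\ldots,Z_n)$, any $a>0$ and any $b\in\RR$,
$$
\widehat{E}(p;a\Vect{Z})=a\,\widehat{E}(p;\Vect{Z}),\qquad
\widehat{E}(p;\Vect{Z}+b\Vect{1})=\widehat{E}(p;\Vect{Z})+b.
$$
Both follow by the change of variable $\theta=a\phi$ (respectively $\theta=\phi+b$) inside the empirical criterion: the indicator $\I_{aZ_i\leq a\phi}$ reduces to $\I_{Z_i\leq\phi}$ because $a>0$, the squared term factors out an $a^2$, and the argmin is unchanged by this positive rescaling of the objective; the location case is analogous since the indicator $\I_{Z_i+b\leq\phi+b}$ reduces to $\I_{Z_i\leq\phi}$. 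These are the sample analogues of \eqref{Eqv1}--\eqref{Eqv2}.

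The second step is to combine this with the scaling relations that define $\tilde{\Vect{X}}^{\Vect{a}^m}$. For \eqref{scaling1}, since $\sigma_m>0$ we have componentwise $|X^{\Vect{a}^m}(i)|^\beta=\sigma_m^\beta\,|\tilde X^{\Vect{a}^m}(i)|^\beta$, so sample scale equivariance applied with the positive constant $a=\sigma_m^\beta$ yields the claimed identity. For \eqref{scaling2}, we write componentwise $\log|X^{\Vect{a}^m}(i)|=\log\sigma_m+\log|\tilde X^{\Vect{a}^m}(i)|=\tfrac12\log(\sigma_m^2)+\log|\tilde X^{\Vect{a}^m}(i)|$, and sample location equivariance with $b=\tfrac12\log(\sigma_m^2)$ concludes.

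There is essentially no obstacle: the only point requiring any care is the argument that the sample expectile $\widehat E$ is indeed equivariant (not merely the population expectile $E_Z$), and this is immediate from the change of variable in \eqref{sampleEZp}. No assumption beyond $\sigma_m>0$ is needed, and the formulas hold pathwise, which is in fact slightly stronger than an in-distribution statement.
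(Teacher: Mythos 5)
Your proof is correct and follows essentially the same route as the paper's: both rest on the change of variable $\theta'=\theta/\sigma_m^\beta$ (resp.\ $\theta'=\theta-\tfrac12\log(\sigma_m^2)$) inside the empirical argmin criterion, using $\sigma_m>0$ so that the indicator is preserved. The only difference is organizational — you first isolate a general scale/location equivariance lemma for the \emph{sample} expectile and then apply it, whereas the paper performs the substitution directly for $h=|\cdot|^\beta$ — and your explicit remark that the objective is merely rescaled by the positive constant $\sigma_m^{2\beta}$, leaving the argmin unchanged, makes precise a step the paper leaves implicit.
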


\begin{proof}

We have:
\begin{eqnarray*}
\widehat{E}\left(p;{|\Vect{X}^{\Vect{a}^m}|^\beta}\right)
&=&argmin_{\theta}{\frac{1}{n-m\ell}}\sum_{i=m\ell}^{n-1}|p-\mathbf{1}_{\{{|X^{\Vect{a}^m}(i)|^\beta}\leq
\theta\}}|.({|X^{\Vect{a}^m}(i)|^\beta}-\theta)^2\\
& =&
argmin_{\theta}{\frac{1}{n-ml}}\sum_{i=ml}^{n-1}|p-\mathbf{1}_{\{{|\tilde{X}^{\Vect{a}^m}(i)|^\beta}\leq
{\frac{\theta}{\sigma_{m}^{\beta}}}\}}|.({|\tilde{X}^{\Vect{a}^m}(i)|^\beta}-\frac{\theta}{\sigma_{m}^{\beta}})^2.
\end{eqnarray*}
Setting $\theta'=\frac{\theta}{\sigma_{m}^{\beta}}$, the proof of
the first relation~(\ref{scaling1}) follows easily. Using the same
methodology, we can demonstrate the result given by equation~(\ref{scaling2}).
\end{proof}

\begin{remark}
It should be stressed here that the scaling relationship relating
the \emph{theoretical $p$th expectiles} for the series
$\Vect{h}(\Vect{{X}^{a^m}})$ and
$\Vect{h}(\Vect{\tilde{X}^{a^m}})$ can be obtained directly using
the scale equivariance property~(\ref{Eqv1}) for
$h(\cdot)=|\cdot|^\beta$ and the location equivariance property~(\ref{Eqv2}) for $h(\cdot)=\log|\cdot|$.
\end{remark}

Now applying the logarithmic transformation to both sides of~(\ref{scaling1}), we get:
\begin{equation} \label{regression1}
\log \widehat{E}\left(p;
{|\Vect{X}^{\Vect{a}^m}|^\beta}\right)=\beta
H\log(m)+\log\left(\sigma^{\beta}(\kappa_{H}^{\Vect{a}})^{\beta/2}E_{|Y|^\beta}(p)\right)+\log \left(\frac
{\widehat{E}\left(p;
{|\Vect{\tilde{X}}^{\Vect{a}^m}|^\beta}\right)}{E_{|Y|^\beta}(p)}\right).
\end{equation}
On the other hand, (\ref{scaling2}) can be reformulated in
the following way:
\begin{equation} \label{regression2}
\widehat{E}\left(p;
{\log|\Vect{X}^{\Vect{a}^m}|}\right)=H\log(m)+\frac12\log(\sigma^{2}\kappa_{H}^{\Vect{a}}) + E_{\log|Y|}(p)+\left( \widehat{E}\left(p;
{\log(|\Vect{\tilde{X}}^{\Vect{a}^m}|})\right) -{E_{\log|Y|}(p))} \right).
\end{equation}
It is noteworthy here that we expect that $\log \widehat{E}\left(p;
{|\Vect{\tilde{X}}^{\Vect{a}^m}|^\beta}\right) / E_{|Y|^\beta}(p) $ and $\widehat{E}\left(p;{\log(|\Vect{\tilde{X}}^{\Vect{a}^m}|^\beta})\right) - E_{\log|Y|}(p)  $
to converge towards 0 as $n\rightarrow\infty$.
Hence, based on equations~(\ref{regression1}) and~(\ref{regression2}),
we opt for an OLS regression scheme. This allows to derive the two
following estimators of the hurst index defined by:
\begin{equation}\label{def-Hbeta}
\widehat{H}^{\beta}= \frac{\tr{\Vect{A}}}{\beta||\Vect{A}||^2}
\left(\log\widehat{E}\left(p;
{|\Vect{X}^{\Vect{a}^m}|^\beta}\right)\right)_{m=1,\ldots,M},
\end{equation}
and
\begin{equation}\label{def-Hlog}
\widehat{H}^{\log}= \frac{\tr{\Vect{A}}}{||\Vect{A}||^2}
\left(\widehat{E} \left(p;\log{|\Vect{X}^{\Vect{a}^m}|}\right)\right)_{m=1,\ldots,M},
\end{equation}
where $\Vect{A}$ is the vector of length $M$ with components
$A_m=\log m -\frac1M \sum_{m=1}^M\log(m)$, $m=1,\ldots,M$ for some
$M\geq 2$ whereas $||\Vect{z}||$ for some vector $\Vect{z}$ of
length $d$ designates the norm defined by $\left(\sum_{i=1}^d
z_i^2 \right)^{1/2}$. Notice here that $\widehat{H}^{\beta}$ and
$\widehat{H}^{\log}$ do not depend on $\sigma^2$.

We would like to put the stress on the fact that (\ref{def-Hbeta})
and (\ref{def-Hlog}) are really similar to the ones developed in
\citet{Coeurjolly1,Coeurjolly2}. Indeed, the standard procedure
developed in~\citet{Coeurjolly1} simply consists in replacing the
sample expectile by the sample variance (this method will be
denoted by ST in Section~\ref{sec-simuls}). To deal with outliers, the
procedure developed in \citet{Coeurjolly2} consists in replacing
the sample expectile by either the sample median of
$(\Vect{X}^{\Vect{a}^m})^2$ or the trimmed-means of
$(\Vect{X}^{\Vect{a}^m})^2$. These two last methods are denoted by
MED and TM in Section~\ref{sec-simuls}.

Now, we state the asymptotic results for these new estimates based on expectiles.

\begin{proposition}\label{prop-H} Let $\Vect{a}$ a filter with order $\nu \geq 2$, $p \in (0,1)$, $\beta>0$ then as $n \to +\infty$, $\widehat{H}^\beta$ and $\widehat{H}^{\log}$ converge in probability to $H$. Moreover, the following convergences in distribution hold
$$
\sqrt{n} \left( \widehat{H}^\beta-H \right)  \stackrel{d}{\longrightarrow}  \mathcal{N}(0,\sigma^2_\beta) \quad \mbox{ and } \quad
\sqrt{n} \left( \widehat{H}^{\log}-H \right)  \stackrel{d}{\longrightarrow}  \mathcal{N}(0,\sigma^2_{\log}),$$
where
$$
\sigma_\beta^2 = \frac1{E_{|Y|^\beta}(p)^2} \;\times \;  \frac{\Vect{A}^T \Mat{\Sigma}^\beta \Vect{A}}{\beta^2 \|\Vect{A}\|^4}
\quad \mbox{ and } \quad
\sigma^2_{\log} = \frac{\Vect{A}^T \Mat{\Sigma}^{\log} \Vect{A}}{\| \Vect{A}\|^4}
$$
and where the $(M,M)$ matrices $\Mat{\Sigma}^\beta$ and $\Mat{\Sigma}^{\log}$ are defined by~(\ref{def-Sigma}).
\end{proposition}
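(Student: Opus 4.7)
The plan is to exploit the two regression identities~(\ref{regression1}) and~(\ref{regression2}), cancel the deterministic constants thanks to the centering of the vector $\Vect{A}$, then apply Corollary~\ref{cor-bahadur} extended from two to $M$ coordinates, and finish with a linear combination (and a one-step delta method in the $\beta$ case).

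Since $A_m = \log m - \frac{1}{M}\sum_{m'=1}^M \log m'$, we have $\sum_m A_m = 0$ and $\sum_m A_m \log m = \|\Vect{A}\|^2$. Plugging~(\ref{regression1}) into the definition~(\ref{def-Hbeta}), the constant $\log(\sigma^\beta (\kappa_H^{\Vect{a}})^{\beta/2} E_{|Y|^\beta}(p))$ drops out, and the term $\beta H \log m$ contributes exactly $\beta H \|\Vect{A}\|^2$, so that
$$
\widehat{H}^\beta - H \;=\; \frac{1}{\beta \|\Vect{A}\|^2} \sum_{m=1}^M A_m \log\!\left( \frac{\widehat{E}(p ; |\Vect{\tilde X}^{\Vect{a}^m}|^\beta)}{E_{|Y|^\beta}(p)} \right).
$$
The same manipulation applied to~(\ref{regression2}) and~(\ref{def-Hlog}) yields
$$
\widehat{H}^{\log} - H \;=\; \frac{1}{\|\Vect{A}\|^2} \sum_{m=1}^M A_m \Bigl( \widehat{E}(p ; \log|\Vect{\tilde X}^{\Vect{a}^m}|) - E_{\log|Y|}(p)\Bigr).
$$

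Each normalized filtered sample $\Vect{\tilde X}^{\Vect{a}^m}$ is a unit-variance centered stationary Gaussian process whose correlation function decays like $|i|^{2H-2\nu}$ (as recalled at the start of Section~3.1). With $\nu\geq 2$ and $H<1$ this gives $\alpha = 2\nu-2H\geq 2$, and both functions $h(\cdot)=|\cdot|^\beta$ (at $\theta = E_{|Y|^\beta}(p)>0$) and $h(\cdot)=\log|\cdot|$ have Hermite rank $\tau=2$, so $\alpha\tau \geq 4 > 1$ puts us in the short-range regime of Corollary~\ref{cor-bahadur}(i). This already gives consistency of each coordinate sample expectile, and combined with continuity of $\log$ yields the stated convergence in probability of $\widehat{H}^\beta$ and $\widehat{H}^{\log}$ to $H$. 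The asymptotic normality requires the joint analogue: extending Corollary~\ref{cor-bahadur}(ii) from two to $M$ subordinated samples is a routine Cram\'er--Wold argument, the only new point being that the cross-correlation $\rho^{m,m'}$ between $\Vect{\tilde X}^{\Vect{a}^m}$ and $\Vect{\tilde X}^{\Vect{a}^{m'}}$ also decays like $|i|^{2H-2\nu}$, which follows from the standard filter computation carried out in~\citet{Coeurjolly1}. This produces the joint $\sqrt{n}$-CLT with limiting covariance matrix $\Mat{\Sigma}^\beta$ (resp.\ $\Mat{\Sigma}^{\log}$) given by~(\ref{def-Sigma}).

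For $\widehat{H}^{\log}$ the conclusion is then immediate: the second display above is exactly the linear functional of the $M$-vector of centered expectiles with weights $A_m/\|\Vect{A}\|^2$, and its asymptotic variance is $\Vect{A}^T \Mat{\Sigma}^{\log} \Vect{A}/\|\Vect{A}\|^4$. For $\widehat{H}^\beta$ a one-step delta method is inserted: since each $\widehat{E}(p ; |\Vect{\tilde X}^{\Vect{a}^m}|^\beta)$ converges in probability to $E_{|Y|^\beta}(p)>0$, a Taylor expansion of $\log$ at this point contributes the multiplicative factor $1/E_{|Y|^\beta}(p)$ in each coordinate, hence the global factor $E_{|Y|^\beta}(p)^{-2}$ in the variance, matching the announced $\sigma^2_\beta$. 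I expect the main obstacle to be essentially bookkeeping: checking carefully that the joint Hermite-expansion structure and the covariance series in~(\ref{def-Sigma}) genuinely transfer from the two-sample statement of Corollary~\ref{cor-bahadur}(ii) to the $M$-sample setting with dilated filters. The remaining work (cancellation of constants, delta method, Cram\'er--Wold reduction) is routine.
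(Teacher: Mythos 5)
Your proposal is correct and follows essentially the same route as the paper's own (sketched) proof: the same decomposition of $\widehat{H}^\beta-H$ and $\widehat{H}^{\log}-H$ into centered expectiles of the normalized filtered series, the same Hermite-rank-2 and $\alpha=2\nu-2H$ argument placing everything in the short-range regime, the same appeal to the hyperbolic decay of the cross-correlations of the dilated filters (the paper cites Lemma~1 of \citet{Coeurjolly2} for this) to get the joint $M$-dimensional CLT, and the same delta-method step for the $\beta$ case. No substantive differences.
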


\begin{proof}
We only provide a sketch of the proof. We claim that once Theorem~\ref{thm-bahadur} and Corollary~\ref{cor-bahadur} are established, the obtention of convergences stated in Proposition~\ref{prop-H} are semi-routine. First of all, let us notice that
\begin{equation}\label{HbetaH}
\widehat{H}^\beta -H = \frac{\Vect{A}^T}{\beta\|\Vect{A}\|^2} \left(\log \left( \frac{\widehat{E}\left(p;{|\widetilde{\Vect{X}}^{\Vect{a}^m}|^\beta} \right)}{E_{|Y|^\beta}}
\right)\right)_{m=1,\ldots,M}
\end{equation}
and
\begin{equation}\label{HlogH}
\widehat{H}^{\log}-H = \frac{\Vect{A}^T}{\beta\|\Vect{A}\|^2}  \left(\widehat{E} \left(p;\log{|\widetilde{\Vect{X}}^{\Vect{a}^m}|}\right) - E_{\log|Y|}(p) \right)_{m=1,\ldots,M}.
\end{equation}
Since the functions $|\cdot|^\beta$ and $\log|\cdot|$ have Hermite rank~2 and since the correlation function of the stationary sequence $\widetilde{\Vect{X}}^{\Vect{a}^m}$ decreases hyperbolically with an exponent $\alpha=2\nu -2H$ then for any $m\in \{1,\ldots,M\}$, Theorem~\ref{thm-bahadur} holds with $r_n=n^{-1/2}$ (since $\alpha \tau >1$ for all $H\in (0,1)$). This ensures the convergence in probability of the new estimates. \\

The cross-correlation between $\widetilde{X}^{\Vect{a}^{m_1}}$ and $\widetilde{X}^{\Vect{a}^{m_2}}$ is defined by
$$
\rho_H^{\Vect{a}^{m_1},\Vect{a}^{m_2}}(j) = \frac{\pi_H^{\Vect{a}^{m_1},\Vect{a}^{m_2}}(j) }{\pi_H^{\Vect{a}^{m_1},\Vect{a}^{m_1}}(0)^{1/2} \pi_H^{\Vect{a}^{m_2},\Vect{a}^{m_2}}(0)^{1/2} } \;\;\mbox{ with }\;\;
\pi_H^{\Vect{a}^{m_1},\Vect{a}^{m_2}}(j) = \sum_{q,r=0}^{\ell} a_q a_r|m_1 q - m_2 r +j|^{2H}.
$$
Lemma~1 in \citet{Coeurjolly2} states that for all $m_1,m_2$ the correlation function $\rho_H^{\Vect{a}^{m_1},\Vect{a}^{m_2}}$ is also decreasing hyperbolically with an exponent $\alpha=2\nu-2H$, then Corollary~\ref{cor-bahadur} may be applied to prove that
\begin{equation}\label{mult-beta}
\left( \widehat{E}\left(p;{|\widetilde{\Vect{X}}^{\Vect{a}^m}|^\beta} \right) - E_{|Y|^\beta} \right)_{m=1,\ldots,M}
\stackrel{d}{\longrightarrow} \mathcal{N}(0,\Mat{\Sigma}^\beta)
\end{equation}
and
\begin{equation}\label{mult-log}
\left( \widehat{E}\left(p;{\log|\widetilde{\Vect{X}}^{\Vect{a}^m}|} \right) - E_{\log|Y|} \right)_{m=1,\ldots,M}
\stackrel{d}{\longrightarrow} \mathcal{N}(0,\Mat{\Sigma}^{\log}),
\end{equation}
where according to~(\ref{def-Sigma}), the $(M,M)$ matrices $\Mat{\Sigma}^\beta$ and $\Mat{\Sigma}^{\log}$ are respectively defined by
\begin{eqnarray}
\Sigma_{m_1m_2}^\beta = \frac1{\psi^\prime\left( E_{|Y|^\beta}(p);p \right)^2} \;
\sum_{i\in \ZZ} \sum_{k\geq 2} \frac{c_k^{\beta}(p)^2 }{k!}
\rho_H^{\Vect{a}^{m_1},\Vect{a}^{m_2}}(i)^k \\
\Sigma_{m_1m_2}^{\log} = \frac1{\psi^\prime\left( E_{\log|Y|}(p);p \right)^2} \;
\sum_{i\in \ZZ} \sum_{k\geq 2} \frac{c_k^{\log}(p)^2 }{k!}
\rho_H^{\Vect{a}^{m_1},\Vect{a}^{m_2}}(i)^k,
\end{eqnarray}
where $(c_k^\beta)_{k\geq 2}$ and $(c_k^{\log})_{k\geq 2}$ are respectively the Hermite coefficients of the functions $|\cdot|^\beta$ and $\log|\cdot|$. The convergences~(\ref{mult-beta}) and~(\ref{mult-log}) combined with~(\ref{HbetaH}) and~(\ref{HlogH}) and the use of the delta-method (for the convergence of $\widehat{H}^{\beta}$) end the proof.
\end{proof}

\subsection{A short simulation study} \label{sec-simuls}

In this section, we investigate the interest of the new estimators
based on expectiles. We consider three different models in our
simulations.
\begin{itemize}
\item[(a)] {\it standard fBm}: non-contaminated fractional Brownian motion.
\item[(b)] {\it fBm with additive outliers}: we contaminate $5\%$ of the observations of the increments of the fractional Brownian motion with an independent Gaussian noise such that the SNR of the considered components equals $-20Db$.
\item[(c)] {\it rounded fBm}: we assume the data are given by the integer part of a discretized sample path of an original fBm.
\end{itemize}
To fix ideas, Figure~\ref{exCont} provides some examples of
discretized sample paths of standard and contaminated fBm. The simulation results are presented in
Tables~\ref{tab-H02} and~\ref{tab-H08}. For these simulations, as
suggested in \citet{Coeurjolly1}, we chose the filter $a=d4$
corresponding to the wavelet Daubechies filter with order~4 (see
\citet{Daubechies}) and the maximum number of dilated filters
$M=5$. Also, in other simulations not presented here, we have
observed that the estimates $\widehat{H}^\beta$ perform better
than $\widehat{H}^{\log}$ and, among all possible choices of
$\beta$, the value $\beta=2$ seems to be a good compromise.
Therefore, we present only the result for this latter estimator,
that is $\widehat{H}^\beta$ with $\beta=2$.

In a first step, we had observed a quite large sensitivity to the
value of the probability $p$ defining the expectile. In order to
have an efficient data-driven procedure, we propose to choose the
probability parameter $p$ via a Monte-Carlo approach as follows:
\begin{enumerate}
\item Estimate the parameters $H$ and $\sigma^2$ using the standard method (the estimation of $\sigma^2$ is not described here but it may be found for example in \citet{Coeurjolly1}). Denote these estimates $\widehat{H}_0$ and $\widehat{\sigma}^2_0$.
\item Generate $B=100$ contaminated fBm with Hurst parameter $\widehat{H}_0$ and scaling coefficient $\widehat{\sigma}^2_0$, define a grid of probabilities $(p_1,\ldots,p_P)$. For each new replication, we estimate $\widehat{H}_0$ with expectiles for all the $p_i$. The optimal $p$, denoted in the tables by $p^{opt}$, is then defined as the one achieving the smallest mean squared error (based on the $B=100$ replications).
\end{enumerate}
The procedure based on expectiles, denoted E(p) in the results, is compared to the standard method (ST) and to methods which efficiently deal with outliers, that is methods MED and TM (the last one is calculated by discarding $5\%$
 of the lowest and the highest values of $(\Vect{X}^{\Vect{a}^m})^2$ at each scale $m$).

The {\it standard fBm} model is used as a control to show that all
methods perform well. As seen in the first two columns of
Tables~\ref{tab-H02} and~\ref{tab-H08}, this is indeed true. All
the methods seem to be asymptotically unbiased and have a variance
converging to zero. We can also remark that in this situation
whatever the value of $H$, estimates based on expectiles exhibit a
performance which is very close to the one of the standard method
(wich can also be viewed as the method based on expectile with
$p=0.5$). Several types of expectiles are investigated. When the
discretized sample path of the fBm is contaminated by outliers, we
recover the results already shown in \citet{Coeurjolly2},
\citet{Achard} or \citet{Kouamo}: methods based on medians or
trimmed-means are very efficient which is in agreement with the
fact that quantiles have a finite gross error sensitivity. The
inefficiency of expectiles for such a contamination is also
coherent since expectiles have infinite gross error sensitivity.
Finally, the interest of the expectile-based method can be seen
with the {\it rounded fBm} corresponding to the last two columns
of each table. In this situation, expectiles are shown to be more
efficient in terms of bias and its variance seems to be not too
much affected by this type of strong contamination. We also put
the stress on the interest and efficiency to choose the $p$ value
based on a Monte-Carlo approach.

\begin{table}[ht]
{\small \hspace*{-1cm}\begin{tabular}{r|rrrrrr}
  \hline
&\multicolumn{2}{c}{Standard fBm} & \multicolumn{2}{c}{fBm with additive outliers} &  \multicolumn{2}{c}{Rounded fBm} \\
 & $n=500$ & \multicolumn{1}{r}{$n=5000$} & $n=500$ & \multicolumn{1}{r}{$n=5000$} &$n=500$ & \multicolumn{1}{r}{$n=5000$}  \\
  \hline
$E(p=0.2)$ & 0.198 (0.033) & 0.200 (0.011) & 0.280 (0.062) & 0.298 (0.024) & 0.334 (0.036) & 0.337 (0.011) \\
 $E(p=0.4)$ & 0.198 (0.032) & 0.200 (0.010) & 0.288 (0.068)& 0.309 (0.026) & 0.298 (0.034) & 0.300 (0.011) \\
 $E(p=0.6)$ & 0.199 (0.032) & 0.200 (0.010) & 0.298 (0.076) & 0.323 (0.029) & 0.284 (0.035) & 0.287 (0.011) \\
 $E(p=0.8)$ & 0.199 (0.033) & 0.200 (0.010) & 0.311 (0.086) & 0.349 (0.034) & 0.275 (0.037) & 0.277 (0.011) \\
 $E(p=p^{opt})$ & 0.199 (0.035) & 0.200 (0.011) & 0.314 (0.085) & 0.368 (0.033) & {\bf 0.249 (0.040)} & {\bf 0.240 (0.012)} \\
\hline
 MED & 0.197 (0.048) & 0.200 (0.016) & 0.227 (0.050) & 0.227 (0.016) & 0.451 (0.158) & 0.361 (0.119) \\
 TM & 0.206 (0.034) & 0.201 (0.011) & {\bf 0.222 (0.052)} & {\bf0.225 (0.016)} & 0.294 (0.038) & 0.289 (0.012) \\
\hline
 ST & {\bf 0.199 (0.032)} & {\bf 0.200 (0.010)} & 0.293 (0.072) & 0.315 (0.027) & 0.290 (0.034) & 0.292 (0.011) \\
   \hline
\end{tabular}}

\caption{Empirical means and standard deviations of $H$ estimates
based on $m=500$ replications of non-contaminated and contaminated
fractional Brownian motions with scale parameter $\sigma=.5$,
Hurst parameter {\bf $H=0.2$} and sample size $n=500,5000$ are
given between brackets. Methods based on expectiles, quantiles and
trimmed-means as well as the standard method are considered. The
filter $a$ correspond to the Daubechies wavelet filter with order
4 (two zero moments) and we set $M_1=1,M_2=5$. According to a
sample size and a model, the method achieving the lowest mean
squared error is printed in bold.\label{tab-H02}}
\end{table}

\begin{table}[ht]
{\small \hspace*{-1cm}\begin{tabular}{r|rrrrrr}
  \hline
&\multicolumn{2}{c}{Standard fBm} & \multicolumn{2}{c}{fBm with additive outliers} &  \multicolumn{2}{c}{Rounded fBm} \\
 & $n=500$ & \multicolumn{1}{r}{$n=5000$} & $n=500$ & \multicolumn{1}{r}{$n=5000$} &$n=500$ & \multicolumn{1}{r}{$n=5000$}  \\
  \hline
$E(p=0.2)$ & 0.796 (0.044) & 0.800 (0.014) & 0.725 (0.065) & 0.715 (0.024) & 0.775 (0.049) & 0.777 (0.014)\\
 $E(p=0.4)$ & 0.795 (0.043) & 0.800 (0.013) & 0.712 (0.072) & 0.700 (0.027) & 0.725 (0.048) & 0.728 (0.014) \\
 $E(p=0.6)$ & 0.795 (0.042) & 0.800 (0.013) & 0.692 (0.083) & 0.679 (0.032) & 0.702 (0.048) & 0.706 (0.013) \\
 $E(p=0.8)$ &  0.794 (0.043) & 0.800 (0.014) & 0.653 (0.105) & 0.632 (0.042) & 0.702 (0.049) & 0.707 (0.014)\\
 $E(p=p^{opt})$ & 0.796 (0.045) & 0.800 (0.014) & 0.722 (0.073) & 0.713 (0.027) & {\bf 0.786 (0.058)} & {\bf 0.786 (0.018)} \\
\hline
 MED &  0.799 (0.064) & 0.800 (0.019) & 0.817 (0.062) & 0.816 (0.021) & 1.242 (2.487) & 0.903 (0.017) \\
 TM &  0.803 (0.045) & 0.801 (0.014) & {\bf 0.815 (0.052)} & {\bf 0.814 (0.017)} & 0.696 (0.048) & 0.689 (0.014) \\
\hline
 ST &{\bf 0.798 (0.042)} & {\bf 0.800 (0.013)} & 0.703 (0.077) & 0.691 (0.029) & 0.710 (0.048) & 0.714 (0.013) \\
   \hline
\end{tabular}}
\caption{Empirical means and standard deviations of $H$ estimates
based on $m=500$ replications of non-contaminated and contaminated
fractional Brownian motions with scale parameter $\sigma=.5$,
Hurst parameter {\bf $H=0.8$} and sample size $n=500,5000$ are
given between brackets. Methods based on expectiles, quantiles and
trimmed-means as well as the standard method are considered. The
filter $a$ correspond to the Daubechies wavelet filter with order
4 (two zero moments) and we set $M_1=1,M_2=5$. According to a
sample size and a model, the method achieving the lowest mean
squared error is printed in bold.\label{tab-H08} }
\end{table}

\begin{figure}
\begin{center}
\begin{tabular}{ll}
\includegraphics[scale=.16]{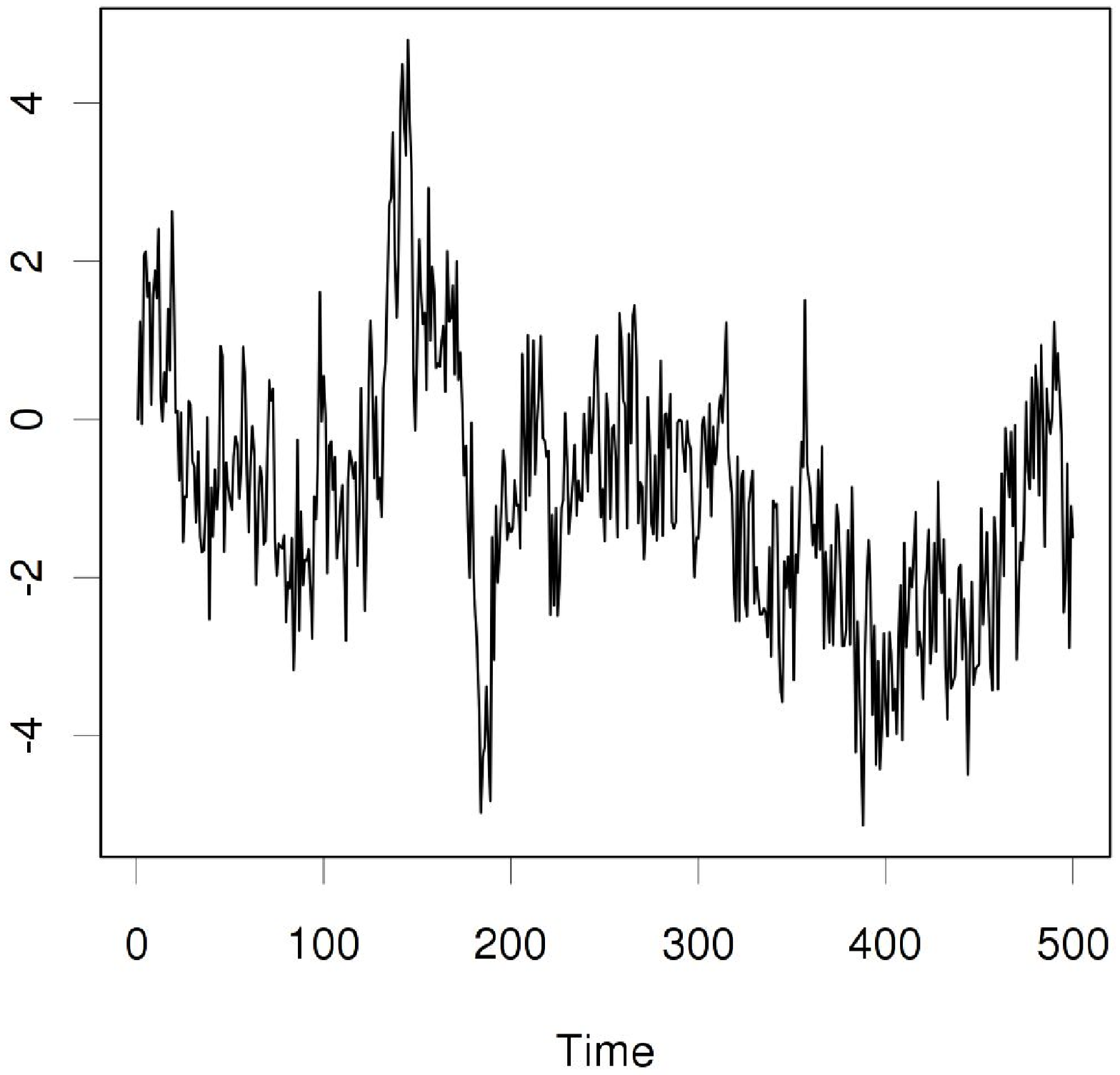} & \includegraphics[scale=.16]{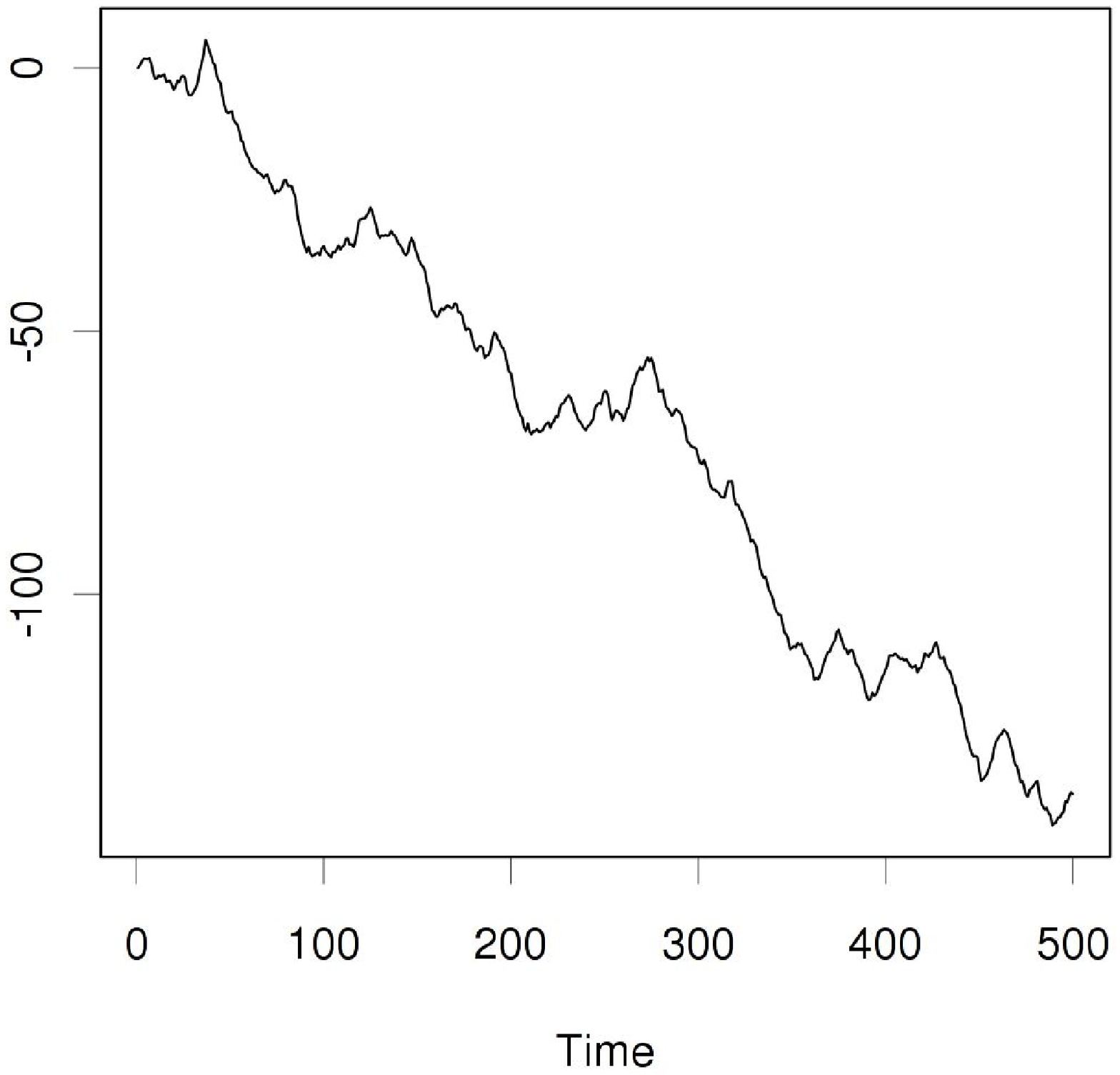} \\
\includegraphics[scale=.16]{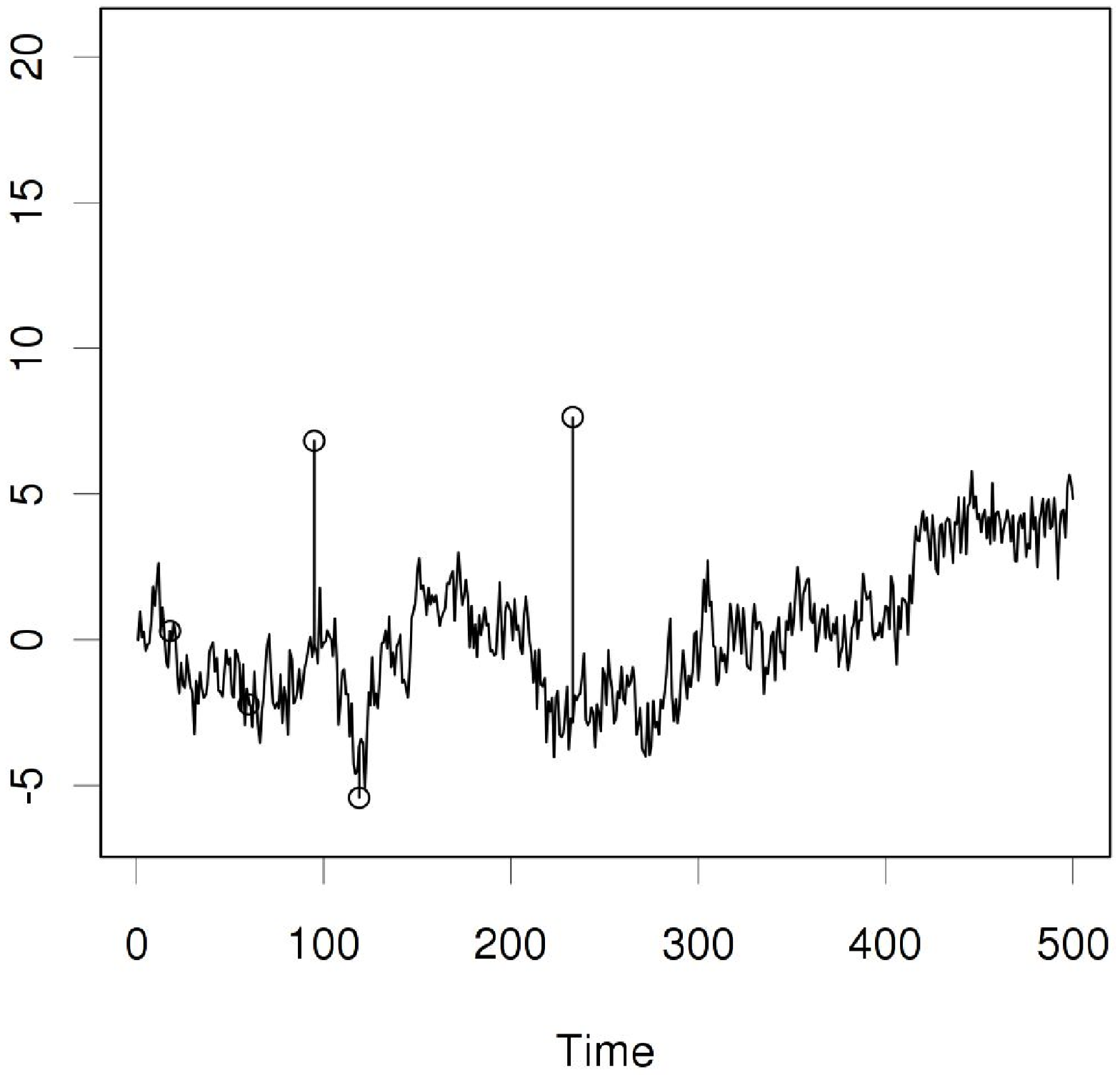} & \includegraphics[scale=.16]{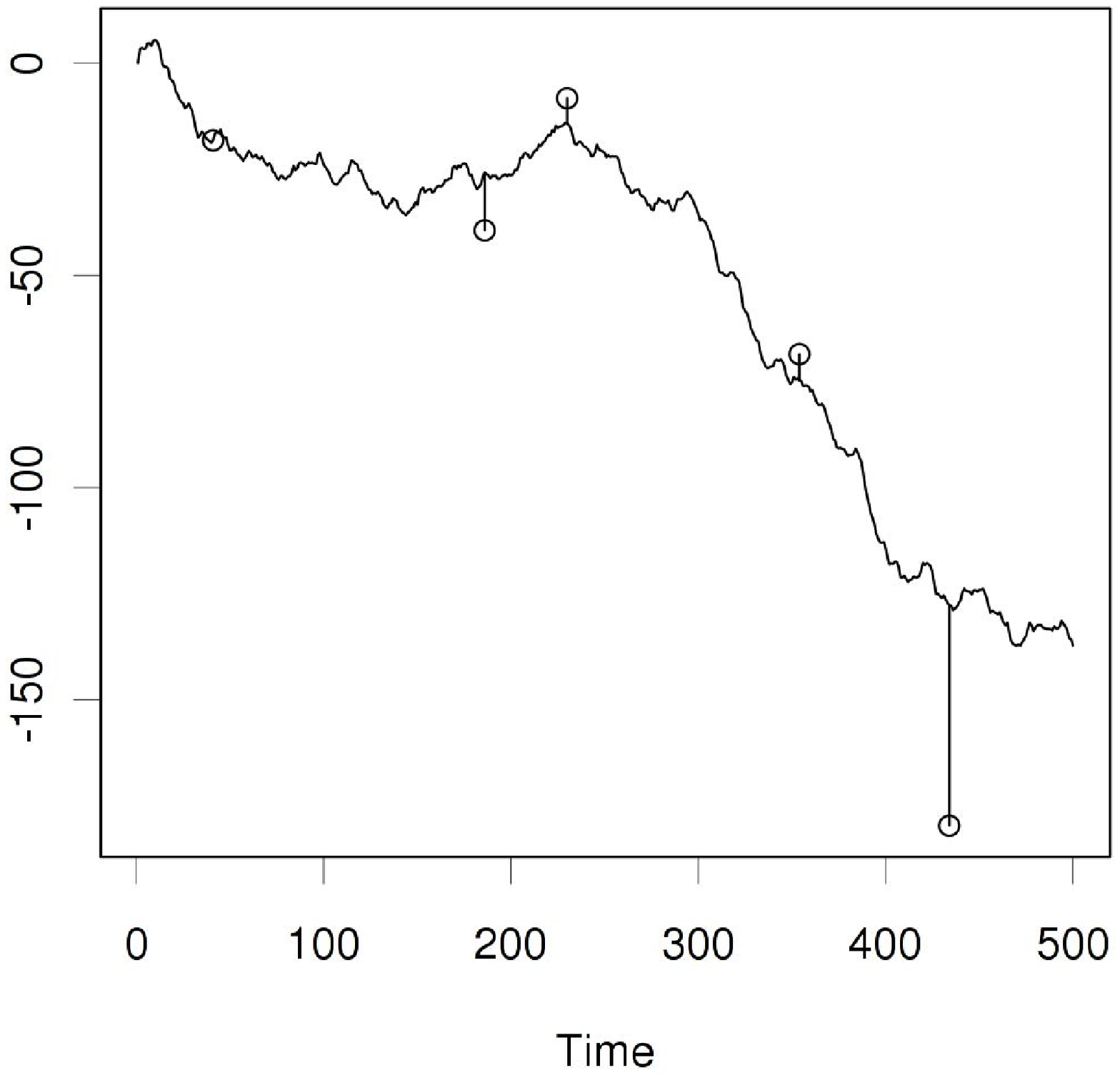} \\
\includegraphics[scale=.16]{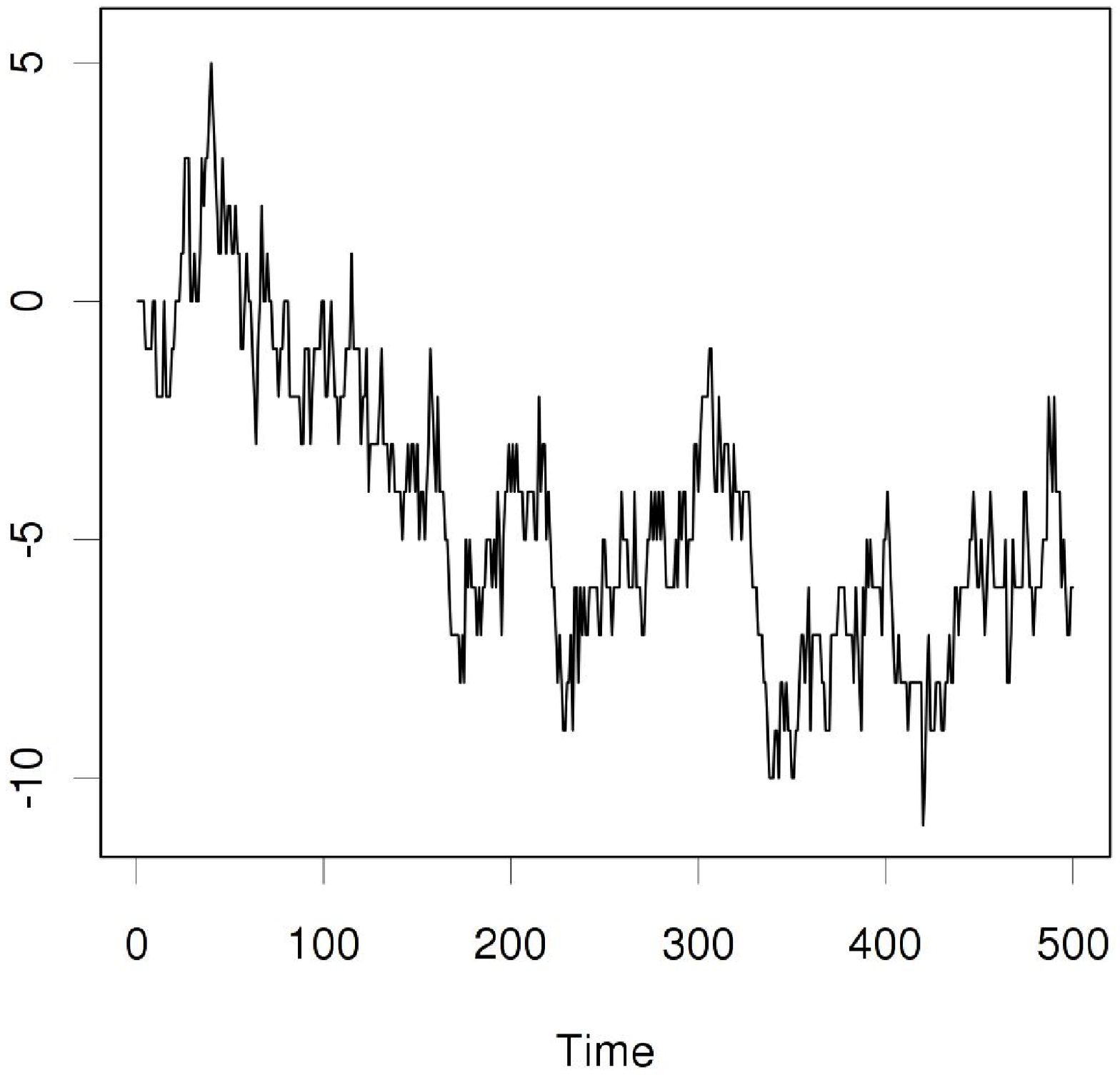} & \includegraphics[scale=.16]{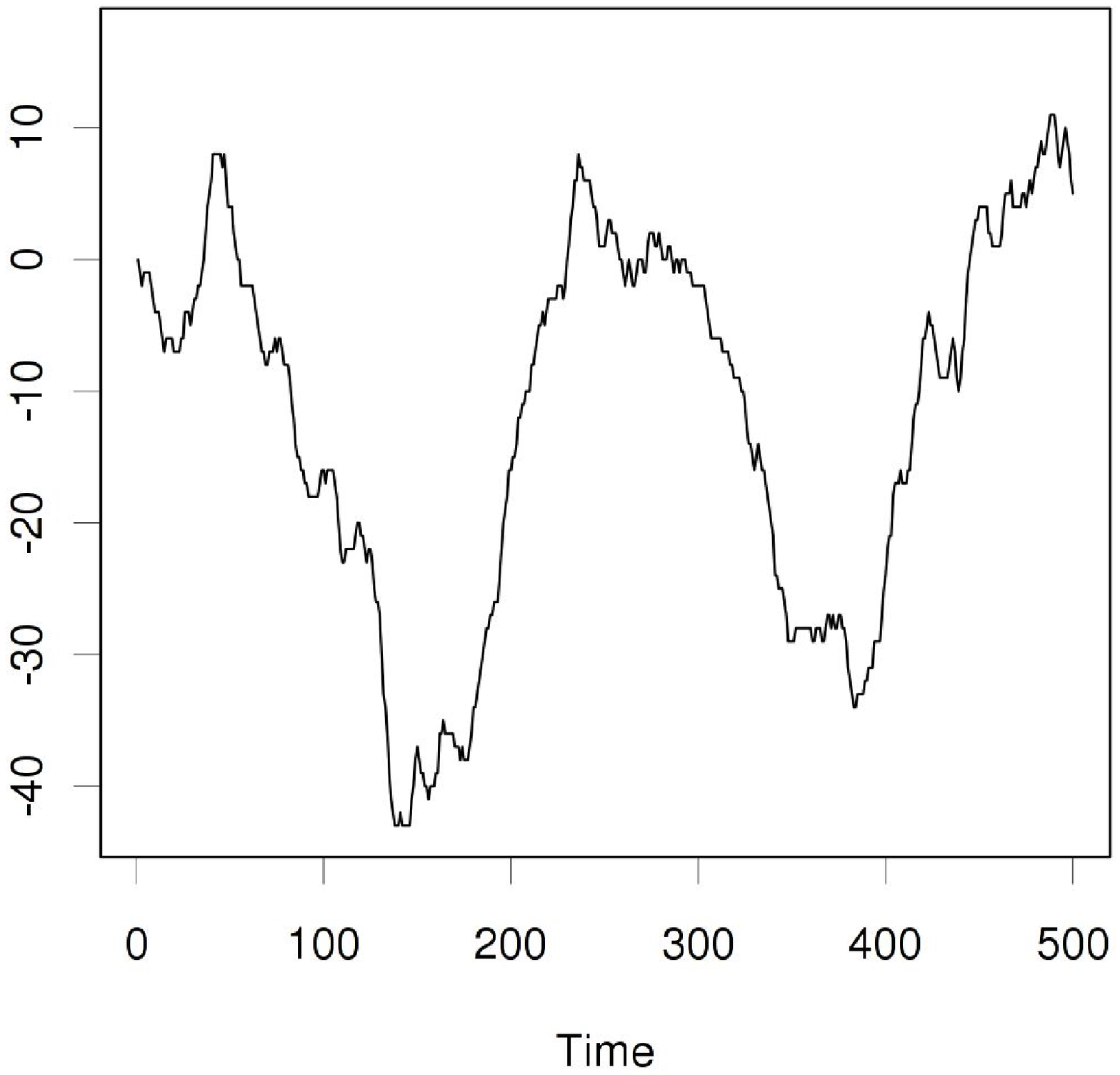}
\end{tabular}
 \caption{Examples of discretized sample path of
standard fBm (top), fBm with additive outliers (middle) and
rounded fBm (bottom) for $n=500$ and with Hurst parameters $H=0.2$
(left) and $H=0.8$ (right). \label{exCont}}
\end{center}
\end{figure}

\footnotesize
\bibliographystyle{plainnat}  
\bibliography{expectile}

\end{document}